\newtheorem{theorem}{Theorem}[section]
\newtheorem{lemma}[theorem]{Lemma}
\newtheorem{proposition}[theorem]{Proposition}
\newtheorem{corollary}[theorem]{Corollary}
\newtheorem{question}[theorem]{Question}
\theoremstyle{definition}
\newtheorem{ex}[theorem]{Example}
\newcommand{\Z}{\mathbb{Z}}
\newcommand{\N}{\mathbb{N}}
\begin{document}
\title{Property $(\diamond)$ for Ore extensions of small Krull dimension}
\author{Ken Brown, Paula A.A.B. Carvalho and Jerzy Matczuk}
\date{\today}
\maketitle

\abstract{This paper is a continuation of a project to determine which skew polynomial algebras $S = R[\theta; \alpha]$ satisfy property $(\diamond)$, namely that the injective hull of every simple $S$-module is locally artinian, where $k$ is a field, $R$ is a commutative noetherian $k$-algebra, and $\alpha$ is a $k$-algebra automorphism of $R$. Earlier work (which we review) and further analysis done here leads us to focus on the case where $S$ is a primitive domain and $R$ has Krull dimension 1 and contains an uncountable field. Then we show first that if $|\mathrm{Spec}(R)|$ is infinite then $S$ does not satisfy $(\diamond)$. Secondly we show that when $R = k[X]_{<X>}$ and $\alpha (X) = qX$ where $q \in k \setminus \{0\}$ is not a root of unity then $S$ does not satisfy $(\diamond)$. This is in complete contrast to our earlier result that, when $R = k[[X]]$ and $\alpha$ is an arbitrary $k$-algebra automorphism of infinite order, $S$ satisfies $(\diamond)$. A number of open questions are stated.}

\noindent {\it Keywords:} Injective module, noetherian ring, simple module, skew polynomial ring.

\noindent{2010 {\it Mathematics Subject Classification.} 16D50, 16P40, 16S35}

\section{Introduction}\label{intro}  A noetherian ring $A$ is said to {\it satisfy}  $(\diamond)$ if, for every simple $A$-module $V$, every finitely generated submodule of the injective hull of $V$ has finite composition length. The fact that commutative noetherian rings satisfy $(\diamond)$ is a consequence of the Artin-Rees lemma, and is part of the seminal work of Matlis  \cite{M}. The terminology was introduced in \cite{PaulaChristianDilek}; summaries of the considerable volume of research on the wider validity of $(\diamond)$  can be found in \cite{Ian} and in \cite{BCM}. The present paper continues our earlier work \cite{BCM}, \cite{BCM2} on the question of when $(\diamond)$ holds for skew polynomial algebras. Specifically, we fix a field $k$, a commutative noetherian $k$-algebra $R$ and a $k$-algebra automorphism $\alpha$ of $R$, and define $S$ to be the algebra $S := R[\theta ; \alpha]$, so $\theta$ is an indeterminate and $\theta r = \alpha(r)\theta$ for $r \in R$. The question of when $S$ satisfies $(\diamond)$ turns out to be surprisingly delicate, and is still far from completely answered.

The state of play for $(\diamond)$ following \cite{BCM}, \cite{BCM2} for the algebras $S$ is reviewed in $\S$\ref{survey}, the headline version going as follows. Let $\mathrm{Kdim}(R)$ denote the Krull dimension of $R$. To a large extent, the question of when $S$ satisfies $(\diamond)$ reduces to the case where $S$ is a primitive domain, (so $\alpha$ has infinite order), $V$ is a faithful simple $S$-module, $M$ is a finitely generated submodule of the injective hull of $V$, and one has to answer the question: does $M$ always have finite composition length? The answer is ``Yes'' if $R$ is a field and (provided $R$ contains an uncountable field) ``No'' if $\mathrm{Kdim}(R) \geq 2$ or $R$ has uncountably many prime ideals. This leaves the issue of what happens when $\mathrm{Kdim}(R)=1$ and $R$ has only countably many maximal ideals. For one such coefficient algebra, namely $k[[X]]$, we showed that the answer is ``Yes'' in \cite[Theorem 1.2]{BCM2}, restated below as Theorem \ref{power}.  

There are two main new results in the present paper, both of them addressing the gap in our knowledge when $\mathrm{Kdim}(R)=1$. The first, the novel part of Theorem \ref{better}, reduces the gap to the case where $R$ is semilocal:

\begin{theorem}\label{firstthm} Suppose that $S= R[\theta; \alpha]$ is a primitive domain, with $R$ containing an uncountable field. If $R$ has infinitely many prime ideals, then $S$ does not satisfy $(\diamond)$.
\end{theorem}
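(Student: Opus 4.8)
\noindent\emph{Proof sketch.} The plan is to produce a simple $S$-module $V$ whose injective hull contains a finitely generated submodule of infinite composition length. I would begin with two reductions. A noetherian ring with infinitely many primes has Krull dimension at least $1$, and the case $\mathrm{Kdim}(R)\ge 2$ is already contained in the results reviewed in $\S$\ref{survey}; so assume $\mathrm{Kdim}(R)=1$. Next, if $I$ is an $\alpha$-stable ideal of $R$ and $(\diamond)$ fails for $(R/I)[\theta;\alpha]$, then it fails for $S$, since a finitely generated $(R/I)[\theta;\alpha]$-module of infinite composition length is such a module over $S$ and its injective hull only enlarges. As the nilradical and Jacobson radical of $R$ are $\alpha$-stable, we may therefore assume that $R$ is reduced; it then has finitely many minimal primes and infinitely many maximal ideals.

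Next I would locate an infinite $\alpha$-orbit of maximal ideals. Since $\alpha$ has infinite order and $R$ has infinitely many maximal ideals, I claim some maximal ideal $\mathfrak m$ has infinite orbit $\mathfrak m_n:=\alpha^n(\mathfrak m)$, $n\in\mathbb Z$: intuitively an infinite-order automorphism cannot act with only finite orbits on the one-dimensional spectrum of $R$, and in the residual case where every orbit is finite one should be able to contradict primitivity of $S$. (This is the step I would most expect to require care in the stated generality.) Replacing $R$ by $R/\bigcap_n\mathfrak m_n$ --- an $\alpha$-stable radical ideal, the quotient still one-dimensional and still carrying this orbit --- we may assume $\bigcap_n\mathfrak m_n=0$.

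With the orbit in hand I would build $M$ as an essential extension $0\to V\to M\to N\to 0$. Here $N$ is a finitely generated (cyclic) $S$-module on which $\theta$ acts invertibly, for instance $N=S/(\theta-u)S$ for a unit $u$ of $R$; then the $S$-submodules of $N$ are exactly the $\alpha$-stable $R$-submodules, and the infinite orbit forces these to include an infinite strictly descending chain $N=N^0\supsetneq N^1\supsetneq\cdots$ whose successive factors run through infinitely many pairwise non-isomorphic simple $S$-modules (those attached to the links $\mathfrak m_i\link\mathfrak m_{i+1}$), so that $N$ has zero socle and infinite length. Taking $V$ to be the simple module at the appropriate end of this chain, I would choose the extension class $\xi\in\mathrm{Ext}^1_S(N,V)$ so that its image in $\mathrm{Ext}^1_S(N^i,V)$ is nonzero for every $i$; such a $\xi$ exists because the classes failing this for some $i$ form a countable union of proper $\mathbb F$-subspaces of $\mathrm{Ext}^1_S(N,V)$, and an $\mathbb F$-vector space over an uncountable field $\mathbb F$ is not a countable union of proper subspaces --- this is the one place the hypothesis that $R$ contains an uncountable field is used. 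For such a $\xi$, a nonzero submodule of $M$ meeting $V$ trivially would map isomorphically onto some $N^i$ and thereby split $\xi$ over $N^i$, which is impossible; hence $V$ is essential in $M$, so $M$ embeds in $E(V)$. As $M$ is finitely generated and of infinite length, $S$ fails $(\diamond)$.

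I expect the real difficulty to lie in the construction of $M$: one must reconcile the demand that $M$ be finitely generated --- which over an Ore extension makes it noetherian, and hence ``bounded'' with respect to the $\theta$-degree filtration --- with the demand that its unique simple submodule be essential, which forces $M$ into the large, non-noetherian injective hull $E(V)$. These are balanced only through the combination of the invertibility of $\theta$ on $N$ (which removes all extraneous $S$-submodules, leaving exactly the orbit chain) and the uncountable-field genericity (which lets a single extension class fail to split over every member of the chain at once). A further point requiring verification is that each restriction $\mathrm{Ext}^1_S(N,V)\to\mathrm{Ext}^1_S(N^i,V)$ is genuinely nonzero; it is here that the contrast with the complete ring $R=k[[X]]$ --- for which the relevant extension groups degenerate and $(\diamond)$ holds, by Theorem \ref{power} --- becomes visible, and where the infinitude of $\mathrm{Spec}(R)$ does the essential work.
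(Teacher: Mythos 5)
Your outline correctly reduces to $\mathrm{Kdim}(R)=1$ and correctly identifies that everything hinges on producing an infinite $\alpha$-orbit of maximal ideals, but you leave exactly that step unproved, and the intuition you offer for it --- that an infinite-order automorphism cannot act on a one-dimensional spectrum with only finite orbits --- is false: the example following Proposition \ref{Jelisi} in the paper (the Frobenius automorphism extended to $\overline{\mathbb{F}_p}[x]$ by $\alpha(x)=x$) has $|\alpha|=\infty$, infinitely many maximal ideals, and every orbit finite. What rules this out under the present hypotheses is primitivity, via the Leroy--Matczuk criterion (Theorem \ref{LMresult}): an $\alpha$-special element $a$ has some product $a\alpha(a)\cdots\alpha^{n-1}(a)$ lying in the (nonzero, $\alpha$-stable) intersection of each finite orbit, so each finite orbit contains a maximal ideal containing $a$; distinct orbits are disjoint, and a nonzero element of a one-dimensional noetherian domain lies in only finitely many maximal ideals, so there are only finitely many finite orbits (Proposition \ref{Jelisi}(i)). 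This short argument is the genuinely new content of the theorem, and it is absent from your proposal.

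The construction of $M$ would also not work as described, and the paper avoids attempting it. The $S$-submodules of $N=S/(\theta-u)S$ are precisely the $\alpha$-stable ideals of $R$, and an infinite orbit contributes no proper nonzero ones (indeed $\bigcap_i\alpha^i(\mathfrak{m})=0$); in particular, when there are no finite orbits $R$ is $\alpha$-simple and $N$ is itself simple, so your descending chain $N^0\supsetneq N^1\supsetneq\cdots$ with pairwise non-isomorphic simple factors ``along the links of the orbit'' does not exist. The paper instead splits into two cases: if the set $\mathcal{P}$ of primes in finite orbits is empty, $R$ is $\alpha$-simple and one quotes \cite[Proposition 4.10]{BCM}; otherwise one uses countable prime avoidance \cite[Proposition 2.5]{SharpVamos} --- this is where the uncountable field enters, rather than through your Ext-genericity argument, which would in any case need the uncountable subfield to be $\alpha$-stable for $\mathrm{Ext}^1_S(N,V)$ to be a vector space over it --- to find an $\alpha$-special element $y\in\bigcap_{P\in\mathcal{P}}P$ avoiding every $\alpha^i(\mathfrak{m})$ for a maximal ideal $\mathfrak{m}$ with infinite orbit, so that localizing $R$ at the multiplicative set generated by $\{\alpha^i(y)\}$ does not yield a field, and then quotes \cite[Proposition 5.3]{BCM}. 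Your instincts about where the two hypotheses must be used are sound, but both load-bearing steps of the proof are missing or incorrect as stated.
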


Our second new result considers the case where the domain $R$ with $\mathrm{Kdim}(R)=1$ is not just semilocal but {\it local}. As explained above we already know that $S$ satisfies $(\diamond)$ when $R = k[[X]]$. But, in contrast to this, we have as Theorem \ref{k[X]thm}:

\begin{theorem}\label{secondthm} Let $k$ be a field of characteristic not 2. Let $R = k[X]_{<X>}$, the localisation of $k[X]$ at its maximal ideal $<X>$. Let the automorphism $\alpha$ be given by $\alpha(X) = qX$,  where $q \in k \setminus \{0\}$ is not a root of unity. Then $S$ does not satisfy  $(\diamond)$. 
\end{theorem}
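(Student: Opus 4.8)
Here is the strategy I would follow; throughout write $R=k[X]_{<X>}$, $v$ for the $X$‑adic valuation on $k(X)$, and note that $\alpha$ preserves $v$ since $\alpha(X)=qX$ with $q$ a unit.

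\smallskip

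\noindent\emph{Reduction and localisation.} It suffices to produce a finitely generated $S$‑module $M$ of infinite composition length together with an essential simple submodule: then $M$ embeds in $E_S(\mathrm{soc}(M))$, contradicting $(\diamond)$. I would work over the localisation $B:=S[X^{-1},\theta^{-1}]=k(X)[\theta^{\pm1};\alpha]$, which is a simple noetherian domain — in fact a simple Dedekind prime ring, being a principal ideal domain over a field with $\alpha$ of infinite order — so the injective hull over $B$ of any simple $B$‑module is artinian. For $f\in k(X)$ with $v(f)\le-1$, the $B$‑module $V_f:=B/B(\theta-f)$ — concretely $k(X)$ with $\theta$ acting by $g\mapsto f\alpha(g)$ — is a \emph{faithful simple $S$‑module}: the $R$‑submodules of $k(X)$ are the fractional ideals $X^nR$, and $\theta(X^nR)=X^{\,n+v(f)}R\not\subseteq X^nR$ because $v(f)<0$. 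Moreover $E_S(V_f)=E_B(V_f)$: on every finitely‑generated $B$‑submodule $B x$ of $E_B(V_f)$ (a module of finite $B$‑length, with $V_f$ essential) both $X$ and $\theta$ act bijectively, hence every $S$‑submodule of $Bx$ is $X$‑ and $\theta$‑divisible and therefore a $B$‑submodule; so $V_f$ is $S$‑essential in $E_B(V_f)$, and $E_B(V_f)$ is $S$‑injective because $B$ is flat over $S$.

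\smallskip

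\noindent\emph{An "exotic" simple module.} The crux is a simple $B$‑module that is \emph{not} simple over $S$. Take $p(\theta)=\theta^2-X$. The equation $f\alpha(f)=X$ has no solution $f\in k(X)$ because $v(f\alpha(f))=2v(f)$ is even while $v(X)=1$; hence $p(\theta)$ is an atom of $k(X)[\theta;\alpha]$ and $W:=B/Bp(\theta)$ is a simple $B$‑module. (The characteristic hypothesis $\mathrm{char}\,k\neq 2$ enters in the detailed analysis of $W$, via the automorphism $\theta\mapsto-\theta$.) As an $S$‑module, $W$ contains the cyclic submodule $\Lambda:=S/Sp(\theta)$, which is free of rank $2$ over $R$ on $\bar 1,\bar\theta$, with $X$ acting by multiplication and $\theta$ by $(r_0,r_1)\mapsto(X\alpha(r_1),\alpha(r_0))$. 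One checks directly that $\Lambda$ has \emph{infinite} $S$‑length: the subsets $\Lambda_a:=X^aR\,\bar 1\oplus X^{a-1}R\,\bar\theta$ $(a\ge1)$ are $S$‑submodules forming a strictly descending chain with quotients of length $2$, and every composition factor of $\Lambda$ is the one‑dimensional module $k$ with $X,\theta$ acting as $0$ (the simple module annihilated by $XS+\theta S$). Finally $W$ has \emph{no} simple $S$‑submodule at all, because $X$ and $\theta$ act injectively on $W\cong k(X)^2$ (so a simple $S$‑submodule would have $X$ or $\theta$ acting bijectively, hence be a $B$‑submodule, hence equal to $W$ — impossible since $\Lambda\subsetneq W$).

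\smallskip

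\noindent\emph{Grafting $W$ below $V_f$.} Using the length‑one free resolution $0\to B\xrightarrow{\;\cdot\,p(\theta)\;}B\to W\to 0$ one gets $\mathrm{Ext}^1_B(W,V_f)=\mathrm{coker}(L)$, where $L\colon k(X)\to k(X)$ is the $k$‑linear map $L(g)=f\alpha(f)\,\alpha^2(g)-Xg$. The corresponding map on $k((X))$ is \emph{bijective}: it is injective since $L(g)=0$ would force $v(f\alpha(f))=1$, and it is surjective because the recursion for the coefficients of the solution of $L(g)=h$ can be run (the top coefficient of $f\alpha(f)$ has index $2v(f)\le-2$). But this bijection does \emph{not} preserve $k(X)$: for $f=X^{-1}$ and $h=f\alpha(f)=q^{-1}X^{-2}$ the unique formal solution works out to $\sum_{n\ge0}q^{-3n^2-2n}X^{3n}$, which is not a rational function because $q$ is not a root of unity, so the sequence $n\mapsto q^{-3n^2-2n}$ satisfies no linear recurrence with constant coefficients. (This is exactly the feature that separates $R=k[X]_{<X>}$ from the complete ring $k[[X]]$ of Theorem~\ref{power}, for which the analogous solutions always lie in the ring.) Hence $\mathrm{Ext}^1_B(W,V_f)\neq 0$; choose a non‑split extension $0\to V_f\to N\to W\to 0$. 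Since $W$ has no simple $S$‑submodule, $\mathrm{soc}_S(N)=V_f$ is $S$‑essential in $N$, so $N\hookrightarrow E_S(V_f)$. Let $M\subseteq N$ be the preimage of $\Lambda\subseteq W$, giving $0\to V_f\to M\to\Lambda\to0$. As $V_f$ and $\Lambda$ are each cyclic over $S$, $M$ is at most $2$‑generated over $S$; it has infinite length because $\Lambda$ does; and $M\subseteq E_S(V_f)$. Therefore $S$ does not satisfy $(\diamond)$.

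\smallskip

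\noindent\emph{Main obstacle.} The routine parts are the localisation bookkeeping and the verification that $\Lambda$ has the stated (infinite) $S$‑composition structure and that $W$ has trivial $S$‑socle. The substantive step is the nonvanishing of $\mathrm{Ext}^1_B(W,V_f)$, i.e.\ making rigorous that the formal solution of $L(g)=h$ fails to be rational; this is where one must use genuinely that $q$ is not a root of unity, and it is the precise point at which the argument for $k[X]_{<X>}$ parts company with the successful argument for $k[[X]]$. I would expect isolating the cleanest choice of the quadratic $p$ (and the place where $\mathrm{char}\,k\neq2$ is really needed) to require some care as well.
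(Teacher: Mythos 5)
Your proposal takes a genuinely different route from the paper. The paper invokes the machinery of \cite{BCM2} (Theorem \ref{target} here) to reduce the failure of $(\diamond)$ to the failure of a factorisation statement $cb=b'c'$ for irreducible elements of types {\bf (B)} and {\bf (C)}, and then rules out that factorisation for the specific pair $c=1+X+\theta+X\theta^2$, $b=1-X+X\theta$ by a lengthy analysis of the roots of the polynomials $f,g$ occurring in $t=\lambda f/g$ (Lemmas \ref{q algebraic} and \ref{L2}). You instead build the offending finitely generated essential extension of a simple module directly over the localisation $B=k(X)[\theta^{\pm1};\alpha]$, using the pair $\theta^2-X$ (type {\bf (C)} after the paper's normalisation) and $\theta-X^{-1}$ (which generates the same ideal as the type {\bf (B)} element $1-X\theta$); the obstruction becomes the non-rationality of the explicit series $\sum_{n\ge0}q^{-3n^2-2n}X^{3n}$. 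I have checked the key computation: $\mathrm{Ext}^1_B(W,V_f)=\mathrm{coker}(L)$ with $L(g)=q^{-1}X^{-2}\alpha^2(g)-Xg$, the recursion $g_{j+2}=q^{-2j-3}g_{j-1}$ with $g_0=1$ does produce that series as the unique solution in $k((X))$ of $L(g)=q^{-1}X^{-2}$, injectivity of $L$ on $k((X))$ follows from the valuation count $v(g)-2\ne v(g)+1$, and non-rationality follows from the linear independence of the geometric sequences $(q^{6i})^n$ because $q$ is not a root of unity. Your argument is considerably shorter than the paper's and, notably, nowhere uses $\mathrm{char}\,k\ne2$; if completed it would remove that hypothesis, which the paper's choice of witnesses cannot do (the paper notes its specific product does factor in characteristic $2$).

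Two points need repair. The one genuine gap is the assertion that ``since $W$ has no simple $S$-submodule, $\mathrm{soc}_S(N)=V_f$ is $S$-essential in $N$'': this is a non sequitur, since a nonzero submodule need not contain a simple one, so identifying the socle does not give essentiality. The correct argument is: if $U$ is a nonzero $S$-submodule of $N$ (or of $M$) with $U\cap V_f=0$, then $(\pi|_U)^{-1}$ is an $S$-module splitting over the nonzero $S$-submodule $\pi(U)$ of $W$; applying $B\otimes_S-$ and using that $B$ is a flat Ore localisation of $S$ with $B\otimes_S\pi(U)\cong B\pi(U)=W$ and $B\otimes_SN\cong N$, this splitting extends to a $B$-module splitting of $0\to V_f\to N\to W\to0$, contradicting the choice of a nonzero class in $\mathrm{Ext}^1_B(W,V_f)$. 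Secondly, the claim that every $S$-submodule of a finite-length $B$-module is $X$- and $\theta$-divisible is false (your own $\Lambda\subsetneq W$ is a counterexample); fortunately the whole paragraph asserting $E_S(V_f)=E_B(V_f)$ is unnecessary, since $V_f$ essential in $M$ already yields $M\hookrightarrow E_S(V_f)$. With these repairs, together with the routine verifications that $\Lambda=S\bar1=R\bar1\oplus R\bar\theta$ carries the strictly descending chain of $S$-submodules $X^aR\bar1\oplus X^{a-1}R\bar\theta$ and that $V_f$ is a simple cyclic $S$-module, the argument is complete and correct.
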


The proof of Theorem \ref{firstthm} is quite short, given our earlier results; it is given in $\S$\ref{reduction}. Much more work is needed to prove Theorem \ref{secondthm} in $\S$\ref{fail}. The method is similar to that used in \cite{BCM2} to prove $(\diamond)$ for $S = k[[X]][\theta; \alpha]$, despite the outcome being the opposite in Theorem \ref{secondthm}. Namely, the starting point remains detailed knowledge of the simple and the cyclic 1-critical $S$-modules, based on the work \cite{BVO} of Bavula and Van Oystaeyen. This knowledge allows us to translate the existence or otherwise of non-split exact sequences of such modules into a condition on the {\it monoid commutativity} of irreducible elements of $S$. We are then able to show that this condition fails for a specific choice of irreducible elements. 

A number of open questions are stated and discussed in $\S\S$\ref{survey} and \ref{reduction}.

\bigskip

\section{$(\diamond)$ for Ore extensions - state of play}\label{survey}
\subsection{Notation and setup}\label{notation} Throughout, unless other hypotheses are explicitly stated, $k$, $R$, $\alpha$, $S$ and $\theta$ will always be as in the first paragraph of the introduction, with $\mathrm{Kdim}(R)$ denoting the Krull dimension of $R$. We will usually work with {\it right} modules, but will suppress this adjective most of the time. If $A$ is a ring and $M$ is an $A$-module, the injective hull of $M$ as an $A$-module is denoted by $E_A(M)$. Extending the terminology of the Introduction, if $M$ is a simple $A$-module and every finitely generated submodule of $E_A(M)$ has finite composition length, we say that $M$ {\it satisfies $(\diamond)$ as an $A$-module}, suppressing the last three words when they are clear from the context. Thus the algebra $S$ satisfies ({\it right}) $(\diamond)$ if every simple (right) $S$-module satisfies $(\diamond)$ as an $S$-module. The motivating issue of this paper and its two predecessors \cite{BCM}, \cite{BCM2} is:

\begin{question}\label{keyqn} Let $k$, $R$, $\alpha$ and $S$ be as above. When does $S$ satisfy $(\diamond)$?
\end{question}

If $I$ is an ideal of a ring $T$ and $V$ is a $T/I$-module then clearly $E_{T/I}(V) \subseteq E_T(V)$. Hence, given a noetherian ring $A$ and a simple $A$-module $V$ with annihilator ideal $P$, 
\begin{equation}\label{reduce} V \textit{ satisfies } (\diamond) \textit{ as an } A\textit{-module only if } V \textit{ satisfies } (\diamond) \textit{ as an } A/P \textit{-module}.
\end{equation}
\noindent  One might hope that the converse to (\ref{reduce}) is also true, but this is not the case: in \cite{GS} Goodearl and Schofield constructed a noetherian algebra $T$ with $\mathrm{Kdim}(T) = 1$ having a simple module $V$ with $ P := \mathrm{Ann}(V)$ a minimal prime of $T$, such that $E_{T/P}(V)$ is locally artinian, but $E_T(V)$ is not locally artinian. Nevertheless, in large measure (though, as we shall explain in $\S$\ref{general}, not completely), Question \ref{keyqn} reduces to the case where the Ore extension $S$ is a primitive domain. We are thus led to consider the primitive ideals $P$ of $S = R[\theta; \alpha]$. We summarise the relevant facts about these ideals in $\S$\ref{previous}.

\medskip

\subsection{$(\diamond)$  when $S$ is a primitive domain}\label{previous}  Each primitive ideal $P$ of $S  = R[\theta; \alpha]$ is either {\it not induced} or {\it induced}: that is,
$$ \textit{either }{\bf (I)}\;(P \cap R)S \subsetneq P; \;  \textit{ or } \; {\bf (II)} \;   (P \cap R)S = P.$$

\noindent For a primitive ideal $P$ of $S$ of Type ${\bf (I)}$ it follows from a result of Irving \cite{I} that $\alpha$ induces an automorphism of finite order on $R/P \cap R$. This implies that $S/(P \cap R)S$ satisfies a polynomial identity \cite[Corollary 10]{DS}, so that $S/P$ is artinian by Kaplansky's theorem \cite[Theorem I.13.4]{BG}. From this it can be shown using arguments related to the second layer condition that the converse to (\ref{reduce}) holds in this situation, so that $E_S(V)$ is locally artinian where $V$ denotes the unique simple $S/P$-module; see \cite[Theorem 3.4]{BCM} for details. 

\medskip

Consider on the other hand a primitive ideal $P$ of $S$ is of Type {\bf (II)}. First we recall that such induced primitive ideals of $S$ are completely characterised by a result of Leroy and Matczuk, (which is valid for an arbitrary commutative noetherian ring $R$):

\begin{theorem}\label{LMresult}\cite[Theorem 4.2]{LM} The ring $ S = R[\theta; \alpha]$ is primitive if and only if $|\alpha| = \infty$ and there exists an element $a \in R$ such that, given any non-zero $\alpha$-invariant ideal $I$ of $R$, there exists $n \geq 1$ such that $0 \neq a\alpha(a) \cdots \alpha^{n-1}(a) \in I$. 
\end{theorem}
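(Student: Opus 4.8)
The plan is to prove the two implications separately, using throughout that the statement is left--right symmetric: the opposite ring of $R[\theta;\alpha]$ is $R[\theta;\alpha^{-1}]$, and $a\alpha^{-1}(a)\cdots\alpha^{-(n-1)}(a)=\alpha^{-(n-1)}\!\big(a\alpha(a)\cdots\alpha^{n-1}(a)\big)$ shows the trajectory condition is unchanged under $\alpha\mapsto\alpha^{-1}$, so I may treat left primitivity and work with left modules and left ideals; write $a_n:=a\alpha(a)\cdots\alpha^{n-1}(a)$, so $a_{m+n}=a_m\alpha^m(a_n)$. For the forward direction, first $|\alpha|=\infty$: if $\alpha^d=\mathrm{id}$ then $\theta^d$ is central, $R[\theta^d]$ is a commutative subring over which $S=\sum_{l=0}^{d-1}R[\theta^d]\theta^l$ is a finitely generated module, so $S$ is a PI ring, and a primitive PI ring is simple artinian by Kaplansky's theorem, contradicting the strictly descending chain $S\theta\supsetneq S\theta^2\supsetneq\cdots$ of left ideals. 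To extract $a$, let $V=S/\mathfrak m$ be a faithful simple module; choosing the generator of $V$ to lie in an associated prime of $V|_R$ one may assume $\mathfrak q:=\mathfrak m\cap R$ is prime, and then $\bigcap_{n\in\Z}\alpha^n(\mathfrak q)$, being an $\alpha$-invariant ideal contained in $\mathrm{Ann}_S(V)=0$, vanishes. A reduction to the case of a field base $F=\mathrm{Frac}(R/\mathfrak q)$ (over which $\bar\alpha$ still has infinite order) identifies the image of $\mathfrak m$ with a maximal one-sided ideal of $F[\bar\theta;\bar\alpha]$; since the proper two-sided ideals of this ring are just the powers of $(\bar\theta)$, and the case $\bar\theta\in\mathfrak m$ can be excluded ($\mathfrak qS+\theta S$ would be a proper ideal containing $\theta S\ne0$ inside $\mathfrak m$), this maximal ideal is $(\bar\theta-\bar a)F[\bar\theta;\bar\alpha]$ for some $\bar a\in F^\times$, and clearing denominators gives $a\in R$; unwinding $\mathrm{Ann}_S(V)=0$ through $\bar\theta^{\,n}\equiv\bar a_n\pmod{(\bar\theta-\bar a)}$ --- a nonzero $\alpha$-invariant ideal disjoint from $\{a_n:n\ge1\}\setminus\{0\}$ would create a nonzero two-sided ideal inside $\mathfrak m$ --- yields precisely the stated condition.

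For the converse, assume $|\alpha|=\infty$ and that the trajectory $\{a_n:n\ge1\}$ meets every nonzero $\alpha$-invariant ideal of $R$ in a nonzero element. By Zorn's lemma it suffices to produce a proper left ideal $L$ of $S$ comaximal with every nonzero two-sided ideal of $S$, since any maximal left ideal above $L$ then has zero core. The model object is the cyclic module $M:=S/S(\theta-a)$: the assignment $\sum_i r_i\theta^i\mapsto\sum_i r_i a_i$ induces an isomorphism of left $R$-modules $M\xrightarrow{\ \sim\ }R$ under which $\theta$ acts as $r\mapsto a\alpha(r)$ and the class of $\theta^n$ is $a_n$ (so $\theta^n-a_n\in S(\theta-a)$); hence the $S$-submodules of $M$ are exactly the ideals $I$ of $R$ with $a\alpha(I)\subseteq I$ --- a class containing every $\alpha$-invariant ideal --- and $S(\theta-a)$ is comaximal with a two-sided ideal $J$ precisely when $\{\sum_i r_i a_i:\sum_i r_i\theta^i\in J\}=R$. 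For an ``induced'' two-sided ideal $IS$ (with $0\ne I$ an $\alpha$-invariant ideal of $R$) this set is just $I$, so $S(\theta-a)$ is far too small on its own; the separation hypothesis is exactly what allows $S(\theta-a)$ to be enlarged to a left ideal $L$ whose image in every $R/I$ is all of $R/I$ while $L$ stays proper, and $|\alpha|=\infty$ is what rules out the two-sided ideals $J$ with $J\cap R=0$ (such as $S\theta^m$) by a degree/recursion argument inside $S$.

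The hard part will be this last construction of $L$. The naive choice $L=S(\theta-a)$ fails once $R$ is not $\alpha$-simple: for $R=k[X]$, $\alpha(X)=qX$ and $a=X$ one has $a_n=q^{\binom n2}X^n$ and $\{\sum_i r_i a_i:\sum_i r_i\theta^i\in S\theta\}=(X)\ne R$, so $S(\theta-a)$ is not comaximal with the two-sided ideal $S\theta$; in fact in this example the faithful simple module is neither cyclic over $R$ nor a subquotient of $S/S(\theta-a)$, whose subquotients are all copies of the trivial module. So the faithful simple module must be manufactured indirectly --- for instance by localising $R$ at the multiplicative set generated by $\{\alpha^i(a):i\in\Z\}$, carrying out the construction over the localisation and then saturating back, or by a transfinite limit over the $\alpha$-invariant ideals of $R$ with the choice at each stage dictated by the separation condition --- and engineering such a construction to yield comaximality simultaneously with all ``induced'' two-sided ideals (where the separation property enters) and with all two-sided ideals meeting $R$ trivially (where $|\alpha|=\infty$ enters) is the core difficulty; the remaining arguments are routine skew-polynomial degree estimates.
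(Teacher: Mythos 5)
The paper offers no proof of this statement --- it is quoted directly from Leroy and Matczuk \cite[Theorem 4.2]{LM} --- so there is nothing internal to compare with; judged on its own terms, your proposal has genuine gaps in both directions. In the forward direction, the step that extracts the special element does not parse: you pass to $F[\bar\theta;\bar\alpha]$ with $F=\mathrm{Frac}(R/\mathfrak q)$, but $\bar\alpha$ exists only if $\alpha(\mathfrak q)=\mathfrak q$, and your own observation that $\bigcap_{n}\alpha^{n}(\mathfrak q)=0$ shows that a \emph{nonzero} $\mathfrak q$ can never be $\alpha$-invariant --- so the reduction collapses precisely in the nontrivial case. Even granting a field base, a maximal left ideal of $F[\bar\theta;\bar\alpha]$ is generated by an irreducible polynomial of arbitrary degree, not necessarily one of the form $\bar\theta-\bar a$ (compare the Type \textbf{(C)} irreducibles $f+Xs$ with $\deg f>1$ in $\S$\ref{fail}), so the claim that the relevant simple module is one-dimensional over $F$ is unjustified and in general false.

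In the converse direction you stop, candidly, at what you call the core difficulty: you show that the natural candidate $S(\theta-a)$ fails and gesture at localisation or a transfinite construction, but you never produce the cofaithful maximal left ideal, so this half is simply not proved. The missing idea is small but decisive: replace $\theta-a$ by an element with \emph{unit constant term}, such as $1+\theta a$. Modulo $S(1+\theta a)$ one has $(\theta a)^{n}\equiv(-1)^{n}$, while a direct computation gives $(\theta a)^{n}=\theta^{n}\,\alpha^{-(n-1)}\bigl(a\alpha(a)\cdots\alpha^{n-1}(a)\bigr)$; hence for every nonzero $\alpha$-invariant ideal $I$ the $\alpha$-speciality of $a$ places $\theta^{n}\alpha^{-(n-1)}(a_{n})\in SI$ and therefore a unit in $S(1+\theta a)+SI$, giving comaximality with every induced two-sided ideal at a stroke (and trivially with $S\theta$, since $1=(1+\theta a)-\theta a$). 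This ``constant term $1$'' normalisation is exactly what distinguishes the Type \textbf{(B)} irreducibles $1+Xs$, whose quotients are the faithful simples in $\S$\ref{fail}, from $\theta$ and the Type \textbf{(C)} elements; without it, as your own example $R=k[X]$, $a=X$ shows, the left ideal $S(\theta-a)$ fails to be comaximal with the induced ideals and its simple quotients are unfaithful. One must still dispose of the two-sided ideals meeting $R$ trivially and verify properness and extendability to a maximal left ideal with zero core, but as written the central construction is absent and the proof is incomplete.
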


\noindent An element $a$ as in the theorem  is called an $\alpha$-{\it special element of} $R$, and if $R$ has an $\alpha$-special element then it is called an $\alpha$-{\it special ring}. Clearly if $R$ is $\alpha$-{\it simple} (meaning that it has no proper $\alpha$-invariant ideal), then it is $\alpha$-special. We'll encounter examples of $\alpha$-special rings which are not $\alpha$-simple in $\S$\ref{powerseries} and $\S$\ref{fail}. 

In analysing the validity of $(\diamond)$ for $S/P \cong (R/P \cap R)[\theta ; \alpha]$ we may  assume that $R/(P \cap R)$, and hence $S/P$, are domains, as follows. Recall that an $\alpha$-invariant ideal $I$ of $R$ is called $\alpha$-{\it prime} if, given $\alpha$-invariant ideals $A$ and $B$ of $R$, $AB \subseteq I$ only if $A \subseteq I$ or $B \subseteq I$. If $R$ is $\alpha$-prime then it is semiprime and $\alpha$ permutes its minimal primes $\mathfrak{p}_1, \ldots , \mathfrak{p}_t$ transitively. Hence  $\alpha^t (\mathfrak{p}_i) = \mathfrak{p}_i$ for all $i$ and $\alpha^t$ induces an automorphism of $R/\mathfrak{p}_i$. Clearly if $P$ is a prime ideal of $S$ then $P \cap R $ is an $\alpha$-prime ideal of $R$. It is now easy to prove
\begin{lemma}\label{domain}(\cite[Lemma 4.5]{BCM}) Suppose that $R$ is $\alpha$-prime with minimal primes $\mathfrak{p}_1, \ldots , \mathfrak{p}_t$. Then $S = R[\theta; \alpha]$ satisfies $(\diamond)$ if and only if $\widehat{S} := (R/\mathfrak{p}_1)[\theta^t ; \alpha^t]$ satisfies $(\diamond)$. Moreover, if $S$ is primitive then so is $\widehat{S}$.
\end{lemma}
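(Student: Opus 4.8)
The plan is to connect $S$ and $\widehat{S}$ through the intermediate ring $T:=R[\theta^{t};\alpha^{t}]$, a subring of $S$, so as to split the statement into two transfer results. First the set-up. Relabelling the minimal primes we may assume $\alpha(\mathfrak{p}_{i})=\mathfrak{p}_{i+1}$, subscripts read modulo $t$; then $\alpha^{t}$ stabilises each $\mathfrak{p}_{i}$ and $\alpha$ restricts to isomorphisms $R/\mathfrak{p}_{i}\to R/\mathfrak{p}_{i+1}$ which intertwine the two actions of $\alpha^{t}$, so $(R/\mathfrak{p}_{i})[\theta^{t};\alpha^{t}]\cong\widehat{S}$ for every $i$. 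Since $\alpha(R)=R$ we have $\theta R=R\theta$, hence $\theta$ normalises $T$ and $S=\bigoplus_{j=0}^{t-1}\theta^{j}T=\bigoplus_{j=0}^{t-1}T\theta^{j}$ is a free normalising extension of $T$ of rank $t$. Moreover $T$ is semiprime noetherian, its minimal primes are precisely the ideals $\mathfrak{p}_{i}T$ $(1\le i\le t)$, with $T/\mathfrak{p}_{i}T\cong\widehat{S}$ and $\bigcap_{i}\mathfrak{p}_{i}T=\bigl(\bigcap_{i}\mathfrak{p}_{i}\bigr)T=0$ because $T$ is $R$-free and hence flat. It therefore suffices to prove: \textup{(1)} $S$ satisfies $(\diamond)$ if and only if $T$ does; and \textup{(2)} $T$ satisfies $(\diamond)$ if and only if $\widehat{S}$ does.

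For \textup{(1)} I would use the transfer of $(\diamond)$ across the finite free normalising extension $T\subseteq S$, by the kind of argument already present in \cite{BCM}. The ingredients are: restriction along $T\hookrightarrow S$ is exact and faithful, and since $S$ is left and right $T$-free of finite rank a module over $S$ has finite composition length precisely when its restriction to $T$ does; induction $-\otimes_{T}S$ is exact (as $S$ is left $T$-free); $\mathrm{Res}^{S}_{T}$ carries injective $S$-modules to injective $T$-modules; and a simple $S$-module restricts to a finite-length semisimple $T$-module whose simple summands form a single orbit under right multiplication by $\theta$, while an induced module $\overline{W}\otimes_{T}S$ has finite length with constituents restricting to orbits of $\overline{W}$. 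Feeding a finitely generated submodule of $E_{S}(V)$, respectively of $E_{T}(\overline{W})$, through these functors and back gives the equivalence in \textup{(1)}.

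For \textup{(2)}, the implication ``$T$ has $(\diamond)$ $\Rightarrow$ $\widehat{S}$ has $(\diamond)$'' is immediate from (\ref{reduce}): a simple $\widehat{S}$-module is a simple $T$-module via $\widehat{S}\cong T/\mathfrak{p}_{1}T$, and $E_{\widehat{S}}(-)\subseteq E_{T}(-)$. The converse is the heart of the matter. A simple $T$-module $W$ has $\mathrm{Ann}_{T}(W)$ prime, hence containing some $\mathfrak{p}_{i}T$, so $W$ is a simple $\widehat{S}$-module; but a priori $E_{T}(W)$ could be strictly larger than $E_{T/\mathfrak{p}_{i}T}(W)$ and contain finitely generated submodules of infinite length --- exactly the Goodearl--Schofield phenomenon recalled in the paper. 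I would exclude this by using that $T=R[\theta^{t};\alpha^{t}]$, being an Ore extension of a commutative noetherian ring, satisfies the strong second layer condition; then every finitely generated submodule of $E_{T}(W)$ has all of its associated primes in the right clique of $\mathrm{Ann}_{T}(W)$, each such prime contains one of the $\mathfrak{p}_{i}T$, and all the quotients $T/\mathfrak{p}_{i}T$ are isomorphic to $\widehat{S}$, so the required bound on composition length reduces, clique by clique, to the hypothesis on $\widehat{S}$. Making this last reduction precise --- controlling the cliques of primes of $T$, and the resulting ``layers'' of $E_{T}(W)$, in terms of $\widehat{S}$ so that no infinite length can appear --- is the step I expect to demand the most work; the transfer across $T\subseteq S$ and the ``only if'' in \textup{(2)} are comparatively routine.

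Finally, the ``moreover'' clause follows from Theorem~\ref{LMresult}. If $S$ is primitive then $|\alpha|=\infty$ and $R$ has an $\alpha$-special element $a$; one checks that $|\alpha^{t}|=\infty$ on $R/\mathfrak{p}_{1}$ (an iterate of $\alpha^{t}$ trivial on $R/\mathfrak{p}_{1}$ would, by conjugating with $\alpha$, be trivial on every $R/\mathfrak{p}_{i}$, hence on $R\hookrightarrow\prod_{i}R/\mathfrak{p}_{i}$), and that $R/\mathfrak{p}_{1}$ is $\alpha^{t}$-special: a nonzero $\alpha^{t}$-invariant ideal $\bar J$ of $R/\mathfrak{p}_{1}$ pulls back to an $\alpha^{t}$-invariant ideal $J$ of $R$ containing $\mathfrak{p}_{1}$, and $L:=\prod_{i=0}^{t-1}\alpha^{i}(J)$ is then a nonzero $\alpha$-invariant ideal of $R$ with $L\subseteq J$ (nonzero because its image in each domain $R/\mathfrak{p}_{j}$ is a product of nonzero ideals), through which the $\alpha$-special property of $a$ can be transported to the image of $a\alpha(a)\cdots\alpha^{t-1}(a)$ in $R/\mathfrak{p}_{1}$. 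Hence $\widehat{S}=(R/\mathfrak{p}_{1})[\theta^{t};\alpha^{t}]$ is primitive by Theorem~\ref{LMresult} again; alternatively this is visible from the correspondence of simple modules in \textup{(1)} and \textup{(2)}, which matches faithful simple modules with faithful simple modules.
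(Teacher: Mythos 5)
The paper does not actually prove Lemma \ref{domain}: it is quoted, with the tag ``it is now easy to prove'', from \cite[Lemma 4.5]{BCM}, so there is no in-paper argument to set yours against. Judged on its own terms, your architecture --- pass to the subring $T:=R[\theta^{t};\alpha^{t}]$, over which $S=\bigoplus_{j=0}^{t-1}T\theta^{j}$ is a free normalising extension of rank $t$, transfer $(\diamond)$ across that extension, and then compare $T$ with its prime quotients $T/\mathfrak{p}_{i}T\cong\widehat{S}$ --- is sensible, and your step (1) together with the ``moreover'' clause (via Theorem \ref{LMresult}) can be pushed through essentially as you indicate. One small caveat in (1): the restriction to $T$ of a simple $S$-module has finite length, but it need not be semisimple, since $S$ is not a crossed product of $T$ by a group ($\theta$ is not invertible); argue with finite length rather than with Clifford theory.

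The genuine gap is the implication ``$\widehat{S}$ satisfies $(\diamond)$ $\Rightarrow$ $T$ satisfies $(\diamond)$'' in your step (2), which you flag as the hardest step and then do not carry out. This is precisely the converse of (\ref{reduce}), which the paper records as open for these algebras (Question \ref{reverse}) and as false for general noetherian rings by the Goodearl--Schofield example \cite{GS}; your intermediate ring $T$ is semiprime with several minimal primes, exactly the shape of ring in which that phenomenon can occur, so the implication cannot be waved through. Invoking the strong second layer condition does not close it as stated: for a simple $T$-module $W$ and a finitely generated $M\subseteq E_{T}(W)$, the second layer condition locates the affiliated primes of $M$ in the clique of $\mathrm{Ann}_{T}(W)$ and shows each contains some $\mathfrak{p}_{i}T$, but it gives no bound on the composition lengths of the corresponding layers --- a layer is merely a finitely generated fully faithful $T/Q$-module for $Q$ in the clique, and such a module can have infinite length even when $T/Q$, a quotient of $\widehat{S}$, satisfies $(\diamond)$ (e.g.\ $T/Q$ itself). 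To repair this one needs at least the Artin--Rees property for the ideals $\mathfrak{p}_{i}T$ and control of the bimodules $\mathfrak{p}_{i}T/(\mathfrak{p}_{i}T)^{2}$, i.e.\ exactly the issues the paper raises after Question \ref{reverse} via \cite[Lemma 1]{RZ}. Until that step is supplied, what you have is a plausible plan rather than a proof.
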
 

\medskip

Here in simplified form is our key previous result concerning $(\diamond)$ for a primitive skew polynomial domain.

\begin{theorem}\label{primitive}\cite[Theorem 5.4]{BCM} Suppose that $S = R[\theta; \alpha]$ is a primitive domain.
\begin{enumerate}
\item[(i)] If $R$ is a field then $S$ satisfies $(\diamond)$.
\item[(ii)] Suppose that $R$ contains an uncountable field. Suppose also that either $R$ has Krull
dimension at least 2, or $\mathrm{Spec}(R)$ is uncountable. Then $S$ does not satisfy $(\diamond)$.
\end{enumerate}
\end{theorem}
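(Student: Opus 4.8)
I would treat the two parts separately, since they are essentially independent.

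For part (i), the plan is to exploit the fact that when $R=K$ is a field, $S=K[\theta;\alpha]$ is a (left and right) principal ideal domain. Since $S$ is a domain, every simple module is torsion, and I would first check that $E:=E_S(V)$ is itself torsion for any simple $V$: given $0\neq x\in E$, essentiality gives $0\neq xs\in V$ for some $s\in S$, and since $V$ is torsion there is $0\neq a\in S$ with $xsa=0$; as $sa\neq 0$ in the domain $S$, this shows $x$ is torsion. Next, over the PID $S$ every finitely generated torsion module is a finite direct sum of cyclic modules $S/aS$ with $a\neq 0$, and each such is finite-dimensional over $K$ (a two-sided division algorithm gives representatives of degree $<\deg a$), hence of finite length as an $S$-module. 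Combining these two observations, every finitely generated submodule of $E$ is a finitely generated torsion module and therefore of finite length, which is exactly $(\diamond)$. Note this argument does not even use primitivity.

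For part (ii) I would argue by contradiction, assuming $S$ satisfies $(\diamond)$. Since $S$ is a primitive domain it has a faithful simple module $V$, and $|\alpha|=\infty$ by Theorem \ref{LMresult}. Property $(\diamond)$ is equivalent to $E:=E_S(V)$ being locally artinian, and since $E$ is the directed union of its cyclic submodules $xS\cong S/\mathrm{ann}(x)$, each of which (being nonzero in the essential extension $E$ of the simple module $V$) contains $V$ as an essential simple socle, $(\diamond)$ forces every uniform cyclic essential extension of $V$ to have finite length. My plan is to violate this by exhibiting one cyclic module $C=S/I$ of infinite length with $\mathrm{soc}(C)\cong V$. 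The raw material is an infinite family of simple modules linked to $V$: using $\mathrm{Kdim}(R)\geq 2$, or the hypothesis that $\mathrm{Spec}(R)$ is uncountable, together with the $\alpha$-special structure of $R$ from Theorem \ref{LMresult}, I would produce a large family of maximal ideals $\mathfrak{m}_\lambda$ of $S$ admitting non-split extensions $0\to V\to X_\lambda\to S/\mathfrak{m}_\lambda\to 0$, equivalently $\mathrm{Ext}^1_S(S/\mathfrak{m}_\lambda,V)\neq 0$, so that each $X_\lambda$ sits inside $E$.

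The main obstacle — and where the uncountable field is indispensable — is the passage from \emph{infinitely many simple modules linked to $V$} to \emph{a single finitely generated submodule of $E$ of infinite length}: a locally artinian module may perfectly well have infinitely many distinct composition factors, so merely exhibiting many $X_\lambda$ does not suffice. The real work is to amalgamate infinitely many of the $X_\lambda$ inside $E$ into one cyclic submodule. Here I would use an uncountable field $k_0\subseteq R$ for a cardinality count: local artinianness confines the isomorphism types of composition factors occurring in any single cyclic subfactor to a set controlled by the (countable) orbit data of the action of $\theta$, whereas the combined hypotheses — which force uncountably many maximal ideals, the uncountable field guaranteeing an uncountable supply of $k_0$-rational residue points — produce uncountably many genuinely distinct linked types $S/\mathfrak{m}_\lambda$. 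This contradiction, rather than any single explicit calculation, is the heart of the argument; organising the linkage data so that the count is clean, and ensuring the amalgamated module is genuinely cyclic of infinite length, is the step I expect to demand the most care.
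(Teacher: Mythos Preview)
This theorem is not proved in the present paper: it is quoted verbatim from \cite[Theorem~5.4]{BCM}, so there is no in-paper proof to compare your proposal against. I can only assess your argument on its own merits.

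Your treatment of (i) is correct and standard. Once $R$ is a field, $S$ is a principal right ideal domain; the torsion argument for $E_S(V)$ is fine, and each cyclic torsion module $S/aS$ is finite-dimensional over $R$, hence of finite length. You do not even need the full structure theorem: a finitely generated submodule of $E$ is a finite sum of such cyclics and is therefore finite-dimensional.

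Your sketch for (ii), however, is not a proof but a programme, and the two load-bearing steps are both missing. First, you do not explain how the hypotheses on $R$ (Krull dimension $\geq 2$, or $\mathrm{Spec}(R)$ uncountable) actually produce non-split extensions $0\to V\to X_\lambda\to S/\mathfrak{m}_\lambda\to 0$; the phrase ``using the $\alpha$-special structure of $R$'' does no work here. Second, and more seriously, the amalgamation step is left as a cardinality slogan: the assertion that local artinianness ``confines the isomorphism types of composition factors occurring in any single cyclic subfactor to a set controlled by the (countable) orbit data of the action of $\theta$'' is unsupported, and indeed a locally artinian module can perfectly well have uncountably many composition-factor types. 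Nothing you have written forces uncountably many of the $X_\lambda$ to sit inside a \emph{single} cyclic submodule of $E$. For comparison, the route actually taken in \cite{BCM} --- visible from its invocation in the proof of Proposition~\ref{Jelisi}(ii) above --- is quite different: one localises $R$ at the $\alpha$-stable multiplicative set generated by the orbit of an $\alpha$-special element and applies \cite[Propositions~4.10 and~5.3]{BCM}, reducing to the $\alpha$-simple case rather than attempting a direct Ext-and-counting argument inside $E_S(V)$.
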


\noindent For primitive skew polynomial domains there is thus a glaring gap between the sufficient condition for $(\diamond)$ given by $(i)$ of the theorem and the necessary conditions provided by $(ii)$. Leaving aside the issue of the presence or otherwise of an uncountable field we therefore formulate the following crucial special case of Question \ref{keyqn}.

\begin{question}\label{primitiveqn} Let $R$ be a commutative noetherian domain of Krull dimension 1 with $\mathrm{Spec}(R)$ countable, and let $\alpha \in \mathrm{Aut}(R)$ with $|\alpha| = \infty$ be such that $R$ is $\alpha$-special, so that $S := R[\theta ; \alpha]$ is primitive. Under what circumstances does $S$ satisfy $(\diamond)$?
\end{question}

\noindent If $R$ is a commutative noetherian domain which is not a field and $\alpha \in \mathrm{Aut}(R)$ with $|\alpha| = \infty$ is such that $R$ is $\alpha$-simple, then $S = R[\theta;\alpha]$ does not satisfy $(\diamond)$, by \cite[Proposition 4.10]{BCM}. So when addressing Question \ref{primitiveqn} we may assume that $R$ is {\it not} $\alpha$-simple. 

We shall address aspects of Question \ref{primitiveqn} in $\S\S$\ref{powerseries}, \ref{reduction} and \ref{fail}. But before doing so we briefly review in $\S$\ref{general} the matter of passing from the primitive case of $(\diamond)$ to the general case.

\medskip

\subsection{From the primitive case to $(\diamond)$}\label{general}

We do not know if the converse to (\ref{reduce}) is true when $S$ is a skew polynomial algebra as in $\S$\ref{survey}; that is, the following is open:

\begin{question}\label{reverse} Let $S = R[\theta; \alpha]$, and let $V$ be a simple $S$-module with $\mathrm{Ann}_S(V) = P$. If $E_{S/P}(V)$ is locally artinian, is $E_S(V)$ locally artinian?
\end{question}

As is more or less apparent from the previous subsection, the difficulty with Question \ref{reverse} arises when $P$ is type {\bf II}, that is $P = (P \cap R)S$. It is not hard to see that in this case $P$ satisfies the Artin-Rees property, from which it follows that 
$$ E_S(V) \; = \; \bigcup_{n \geq 1} E_{S/P^n}(V). $$
Thus to give a positive answer to Question \ref{reverse} one must show that $E_{S/P^n}(V)$ is locally artinian for all $n$, which leads one via \cite[Lemma 1]{RZ} to issues about the bimodule structure of $P^{n-1}/P^n$. In many important cases, for example when $S$ is polynormal \cite[Corollary 3.5]{BCM}, these matters can be handled to yield a positive answer to Question \ref{reverse}. Moreover the problem can also be circumvented when $R$ is an affine $k$-algebra:

\begin{theorem}\label{affine}\cite[Theorem 6.1, Remark $(ii)$ of $\S$6]{BCM} Let $R$ be a commutative affine $k$-algebra, $S = R[\theta; \alpha]$ where $\alpha$ is as usual a $k$-algebra automorphism. Suppose that $k$ is either uncountable or has characteristic 0.Then $S$ satisfies $(\diamond)$ if and only if $\mathrm{dim}_k(V) < \infty$ for every simple $S$-module (if and only if $S$ has no induced primitive ideals). 
\end{theorem}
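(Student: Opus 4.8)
The plan is to prove the chain of implications linking three properties: \emph{(a)} $S$ satisfies $(\diamond)$; \emph{(b)} $\dim_k V<\infty$ for every simple $S$-module $V$; \emph{(c)} $S$ has no induced primitive ideals. I would dispose of the equivalence \emph{(b)} $\Leftrightarrow$ \emph{(c)} first, since it is elementary and can be checked one primitive ideal at a time (recall that every primitive ideal of $S$ is the annihilator of a simple module). If a primitive ideal $P$ is \emph{not} induced, i.e. of Type \textbf{(I)}, then by Irving's theorem $\alpha$ has finite order on $R/(P\cap R)$, so $S/(P\cap R)S$ satisfies a polynomial identity by \cite{DS}, and hence so does its quotient $S/P$; since $S=R[\theta;\alpha]$ is an \emph{affine} $k$-algebra (generated over $k$ by $\theta$ together with generators of $R$), so is $S/P$, and a simple module over an affine PI $k$-algebra is finite-dimensional over $k$ (Kaplansky's theorem, together with the fact that the centre of an affine PI algebra is affine and Zariski's lemma), so $\dim_k V<\infty$ for the simple $S/P$-module $V$. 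If on the other hand $P=(P\cap R)S$ is induced, then $S/P\cong (R/(P\cap R))[\theta;\alpha]$ is a skew polynomial ring over a non-zero ring, hence a primitive $k$-algebra of infinite $k$-dimension, so a faithful simple $S/P$-module (which exists, $S/P$ being primitive) is infinite-dimensional over $k$. This gives \emph{(b)} $\Leftrightarrow$ \emph{(c)}; and it shows that under \emph{(b)} or \emph{(c)} every primitive ideal $P$ of $S$ is of Type \textbf{(I)}, whence $S/P$ is artinian and the second-layer-condition argument recalled in $\S$\ref{previous} (i.e. \cite[Theorem 3.4]{BCM}) shows $E_S(V)$ is locally artinian for the associated simple module; ranging over all primitive ideals, $S$ satisfies $(\diamond)$. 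So \emph{(b)} and \emph{(c)} each imply \emph{(a)}.

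It remains to prove \emph{(a)} $\Rightarrow$ \emph{(c)}, and here I argue the contrapositive: if $S$ has an induced primitive ideal $P$, then $(\diamond)$ fails. Using (\ref{reduce}) we may work inside $S/P\cong (R/(P\cap R))[\theta;\alpha]$, and then by Lemma \ref{domain} we may replace this by $\widehat S=(R/\mathfrak{p}_1)[\theta^t;\alpha^t]$, which is a primitive affine \emph{domain}; it suffices to show $\widehat S$ fails $(\diamond)$. Rename it $\bar R[\theta;\alpha]$. Since $|\alpha|=\infty$ by Theorem \ref{LMresult}, $\bar R$ cannot be a field (an affine domain which is a field is finite over $k$ and so has finite automorphism group), so $\mathrm{Kdim}(\bar R)\geq1$. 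Suppose first that $k$ is uncountable. Then $\bar R$ contains the uncountable field $k$, and an affine domain of Krull dimension $\geq1$ over an uncountable field has uncountably many maximal ideals: by Noether normalisation $\bar R$ is module-finite over some $k[x_1,\dots,x_d]$ with $d\geq1$, the ideals $(x_1-a,x_2,\dots,x_d)$ for $a\in k$ give $|k|$ distinct maximal ideals of $k[x_1,\dots,x_d]$, and finitely many — but at least one — maximal ideals of $\bar R$ lie over each. Hence $\mathrm{Spec}(\bar R)$ is uncountable, and Theorem \ref{primitive}(ii) shows $\widehat S$ does not satisfy $(\diamond)$.

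Finally, suppose $\mathrm{char}\,k=0$, with $k$ possibly countable; here Theorem \ref{primitive}(ii) does not apply directly, and I would reach it by a base change. Choose an uncountable field $K\supseteq k$ (possible in characteristic $0$, e.g. a rational function field over $k$ in uncountably many indeterminates). After a preliminary \emph{finite} extension of $k$ — replacing $k$ by its algebraic closure $k_0$ in $\mathrm{Frac}(\bar R)$, which is finite over $k$ since $\mathrm{Frac}(\bar R)$ is finitely generated, and absorbing it into $\alpha$ by a further reduction of Lemma \ref{domain} type — one may assume $\bar R$ is geometrically integral, so $\bar R_K:=\bar R\otimes_k K$ is again an affine domain; one then checks that it remains $(\alpha\otimes\mathrm{id})$-special with $|\alpha\otimes\mathrm{id}|=\infty$, so that $S_K:=\bar R_K[\theta;\alpha\otimes\mathrm{id}]$ is a primitive affine domain over the uncountable field $K$, whence $S_K$ fails $(\diamond)$ by the previous paragraph. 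If $\widehat S$ satisfied $(\diamond)$ then so would $S_K$, a contradiction. I expect this last step to be the main obstacle: over and above the bookkeeping needed to make the coefficient ring geometrically integral while retaining control of $\alpha$ and its order, one must show that failure of $(\diamond)$ descends along the field extension $K/k$ — equivalently, that $(\diamond)$ is stable under this base change — which is not automatic for a general noetherian algebra and is the substance of Remark (ii) of $\S$6 of \cite{BCM}.
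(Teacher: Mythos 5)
The paper does not actually prove Theorem \ref{affine}; it is quoted from \cite[Theorem 6.1 and Remark (ii) of $\S$6]{BCM}, so there is no in-paper argument to compare you against, and your proposal has to be judged on its own terms. Most of it is sound and follows the route the paper itself signposts in $\S$\ref{previous}: the equivalence of (b) and (c) via Irving's theorem, Kaplansky's theorem and the Artin--Tate/Zariski argument on one side, and the faithfulness of a simple module over the infinite-dimensional primitive quotient $S/P$ on the other; the implication (c) $\Rightarrow$ (a) via \cite[Theorem 3.4]{BCM}; and, when $k$ is uncountable, the contrapositive of (a) $\Rightarrow$ (c) by reducing through (\ref{reduce}) and Lemma \ref{domain} to a primitive affine domain $\widehat{S}=\bar{R}[\theta^t;\alpha^t]$ with $\bar{R}$ not a field (your finite-automorphism-group observation is the right way to exclude the field case, where Theorem \ref{primitive}(i) would block you), followed by the Noether-normalisation count of maximal ideals and Theorem \ref{primitive}(ii). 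All of that is correct.

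The genuine gap is the characteristic-zero case with $k$ countable, which is precisely the content of Remark (ii) of $\S$6 of \cite{BCM} and hence half of the citation. Your base-change strategy needs the implication: if $\widehat{S}$ satisfies $(\diamond)$ then so does $\widehat{S}\otimes_k K$ for an uncountable field extension $K/k$. You flag this yourself, and rightly: it is not a routine verification. Injective hulls do not commute with extension of the base field, a simple $\widehat{S}\otimes_k K$-module need not have finite length over $\widehat{S}$, and a finitely generated non-artinian essential extension over $\widehat{S}\otimes_k K$ is not finitely generated over $\widehat{S}$, so neither direction of transfer of $(\diamond)$ along $K/k$ comes for free. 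There is also the secondary difficulty you mention: $\alpha$ only normalises, rather than centralises, the algebraic closure of $k$ in $\mathrm{Frac}(\bar{R})$, so after enlarging $k$ the ring $\bar{R}\otimes_k K$ need not carry the skew polynomial structure you want without further argument. As written, your proposal establishes the theorem only under the hypothesis that $k$ is uncountable; the characteristic-zero refinement still needs an argument, and until the base-change step is proved (or replaced by a direct construction of a non-artinian essential extension over $\widehat{S}$ itself, e.g.\ by finding an $\alpha$-special element whose orbit-localisation is not a field without appealing to countable prime avoidance), this direction remains open in your write-up.
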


It transpires that the question of when every simple $S$-module is finite dimensional is itself very delicate; see \cite[$\S$6, $\S$7]{BCM} for a discussion and an analysis of some examples.

\medskip

\subsection{The case $R = k[[X]]$}\label{powerseries}

The first examples living in the gap between $(i)$ and $(ii)$ of Theorem \ref{primitive} whose membership of $(\diamond)$ was determined were the following:

\begin{theorem}\label{power}(\cite[Theorem 1.4]{BCM2}) Let $R = k[[X]]$, where $k$ is any field, and let $\alpha$ be a $k$-algebra automorphism of $R$. Let $S = R[\theta; \alpha]$. Then $S$ satisfies $(\diamond)$.
\end{theorem}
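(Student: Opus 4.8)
The plan is to dispose quickly of the case $|\alpha|<\infty$, and then, when $|\alpha|=\infty$, to classify the simple $S$-modules, reduce the assertion to a statement about exact sequences of simple and cyclic $1$-critical $S$-modules, reinterpret that statement as a monoid-commutativity condition on irreducible elements of $S$, and finally verify the condition by an $X$-adic recursion whose convergence is guaranteed by the completeness of $k[[X]]$.

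First suppose $|\alpha|<\infty$. Then for every primitive ideal $P$ of $S$ the automorphism induced by $\alpha$ on $R/(P\cap R)$ has finite order, so $(R/(P\cap R))[\theta;\alpha]$ is not primitive by Theorem \ref{LMresult}; hence $P$ cannot be of Type ${\bf (II)}$. So every primitive ideal of $S$ is of Type ${\bf (I)}$ and $(\diamond)$ holds by \cite[Theorem 3.4]{BCM}. Assume henceforth that $|\alpha|=\infty$. Every ideal of $R=k[[X]]$ is a power $\mm^m=(X^m)$ and is $\alpha$-invariant, and $X\alpha(X)\cdots\alpha^{m-1}(X)$ has $X$-adic valuation $m$, hence is a nonzero element of $\mm^m$; so $X$ is $\alpha$-special and $S=R[\theta;\alpha]$ is a primitive domain by Theorem \ref{LMresult}. (Here $R$ is not $\alpha$-simple, consistently with \cite[Proposition 4.10]{BCM}.) Next classify the simple $S$-modules $V$. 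Because $Xr=rX$ in $R$ and $\alpha(X)=uX$ with $u\in R^{\times}$, the subset $VX$ is an $S$-submodule of $V$, so $VX=0$ or $VX=V$. If $VX=0$ then $V$ is a simple module over $S/XS\cong k[\theta]$, hence finite-dimensional over $k$ with annihilator of Type ${\bf (I)}$, and $E_S(V)$ is locally artinian by \cite[Theorem 3.4]{BCM}. If $VX=V$ then $X$ acts bijectively on $V$, so $V$ is a module over the Ore localisation $Q:=S[X^{-1}]\cong k((X))[\theta;\alpha]$, a skew polynomial ring over a field and hence a noetherian principal ideal domain; by the analysis of modules over $R[\theta;\alpha]$ in \cite{BVO}, such a $V$ is faithful over $S$ and arises, as an $S$-module, from a suitable irreducible element $f$ of $Q$. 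It remains to prove the theorem for these faithful simple modules.

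Fix such a faithful simple $V$, and put $E:=E_S(V)$; since $V$ is essential in $E$, every nonzero finitely generated submodule of $E$ contains $V$. Using the detailed description in \cite{BVO} of the simple and the cyclic $1$-critical $S$-modules, one reduces ``$E$ is locally artinian'' to the statement that all the relevant exact sequences built from such modules split — a finitely generated submodule of $E$ of infinite length would, by the theory of critical modules, produce a non-split sequence of this kind. Translating further — again via \cite{BVO}, whereby the simple and $1$-critical modules in play are presented as cyclic modules $S/hS$ for explicit $h$ — the splitting of these sequences becomes a \emph{monoid-commutativity} statement for irreducible elements of $S$: for the pairs $f,g$ of similar irreducibles that occur one needs $fgS=gfS$ (and likewise for the iterated products $f_1f_2\cdots f_n$ corresponding to longer submodules). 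This is the same mechanism used in \cite{BCM2} for the present ring. So the theorem reduces to verifying this commutativity for the irreducible elements of $S=k[[X]][\theta;\alpha]$ that arise.

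To prove it, write elements of $S$ in the normal form $\sum_i a_i\theta^i$ with $a_i\in k[[X]]$; for similar irreducibles $f$ and $g$, express the sought relation $fg=(gf)v+\cdots$ (with $v\in S^{\times}$ and lower-order corrections) as a system of equations for the coefficients of $v$ and of the corrections, and solve it one $X$-adic layer at a time. The point is that this recursion determines the unknown coefficients uniquely modulo each successive power of $X$, so by the completeness of $k[[X]]$ it has an honest solution in $S$, yielding $fgS=gfS$. Setting up these normal forms and the recursion carefully, and verifying its $X$-adic convergence, is the main obstacle in the proof. It is precisely here that the hypothesis $R=k[[X]]$ is used: over a non-complete base such as $k[X]_{<X>}$ the formally-constructed solution of the analogous recursion need not lie in $R$, the products fail to commute, the corresponding sequences fail to split, and one obtains instead a failure of $(\diamond)$ of exactly the kind recorded in Theorem \ref{secondthm}.
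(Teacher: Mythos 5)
Your outline follows the route that the paper attributes to \cite[Theorem 1.2]{BCM2} (the result is only quoted here, not reproved): dispose of $|\alpha|<\infty$ via Type {\bf (I)} primitive ideals, observe that $X$ is $\alpha$-special so $S$ is a primitive domain, classify the faithful simples and the cyclic $1$-critical modules via \cite{BVO}, and reduce $(\diamond)$ to a ``monoid commutativity'' condition on irreducible elements. Your preliminary reductions are essentially correct. However, you misstate the commutativity condition: it is not that similar irreducibles $f,g$ satisfy $fgS=gfS$. The correct statement, recorded as Theorem \ref{target}(iii), involves a Type {\bf (B)} irreducible $b$ and a Type {\bf (C)} irreducible $c$ --- elements of different types, hence never similar to one another --- and asks that $cb=b'c'$ for some $b'$ similar to $b$ and some $c'$ similar to $c$. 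The reversal of the order of the two types across the equation, and the fact that the new factors are only similar to (not equal to) the old ones, are exactly what make the condition nontrivial.

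The more serious gap is that the decisive step --- actually solving $cb=b'c'$ in $k[[X]][\theta;\alpha]$ --- is asserted rather than proved, and the assertion as you phrase it is false. Writing out the coefficient equations (compare (\ref{E71})--(\ref{E74}) and (\ref{reduced}) in $\S$\ref{fail}) yields a system that is \emph{nonlinear} in the unknown $t$ and its $\alpha$-translates; it does not ``determine the unknown coefficients uniquely modulo each successive power of $X$''. Already at order zero the paper's own computation (\ref{t0}) produces two choices, $t_0=1$ or $t_0=q$, and for a nonlinear recursion solvability at each higher order is not automatic: one must show the relevant linearisation is invertible at every stage, which is where the arithmetic of $q$ genuinely enters. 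Without that verification the argument has no content. Note also that your closing explanation of Theorem \ref{secondthm} is not what happens in the paper: over $k[X]_{\langle X\rangle}$ the obstruction found in Lemma \ref{q algebraic} is not that a formally constructed solution fails to converge to a rational function, but that equation (\ref{reduced}) is insoluble outright, proved by a degree and root-counting argument on a putative solution $t=\lambda f/g$. Your completeness heuristic points at the right dichotomy but does not substitute for either analysis.
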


Notice that, for any choice of $\alpha$, $X$ is an $\alpha$-special element; but $R$ is never $\alpha$-simple. The proof of Theorem \ref{power} in the non-trivial case when $|\alpha| = \infty$ hinges on a classification of the cyclic 1-critical and the simple $S$-modules, building on work of Bavula and Van Oystaeyen \cite{BVO}, combined with an analysis of the existence of non-split extensions of such modules using a type of ``monoidal commutativity'' criterion for the irreducible elements of $S$. Since we will need these ideas in proving Theorem \ref{k[X]thm} below, we recall the basic facts at the start of $\S$\ref{fail}. 

\bigskip

\section{Reduction to $R$ semilocal when $\mathrm{Kdim}(R) = 1$}\label{reduction}

In this section we improve Theorem \ref{primitive} by reducing the gap between its necessary and sufficient conditions for $(\diamond)$. Namely, we shall prove

\begin{theorem}\label{better}Suppose that $S = R[\theta; \alpha]$ is a primitive domain.
\begin{enumerate}
\item[(i)] If $R$ is a field then $S$ satisfies $(\diamond)$.
\item[(ii)] Suppose that $R$ contains an uncountable field. Suppose also that either $R$ has Krull
dimension at least 2, or $\mathrm{Spec}(R)$ is infinite. Then $S$ does not satisfy $(\diamond)$.
\end{enumerate}
\end{theorem}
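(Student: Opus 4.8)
Part $(i)$ is identical with part $(i)$ of Theorem \ref{primitive}, so only the new part of $(ii)$ needs attention, and there part $(ii)$ of Theorem \ref{primitive} already settles the cases $\mathrm{Kdim}(R)\geq 2$ and $\mathrm{Spec}(R)$ uncountable. So I would reduce immediately to the situation $\mathrm{Kdim}(R)=1$ with $\mathrm{Spec}(R)$ countably infinite; since $R$ is then a one-dimensional domain it has infinitely many maximal ideals. If $R$ is $\alpha$-simple it is not a field, and \cite[Proposition 4.10]{BCM} already gives that $S$ fails $(\diamond)$; so I would also assume $R$ is not $\alpha$-simple and fix an $\alpha$-special element $a$ as in Theorem \ref{LMresult}. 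Then $a$ is a non-zero non-unit (a unit $a$ would make every non-zero $\alpha$-invariant ideal equal to $R$), so the set $V(a)$ of maximal ideals containing $a$ is finite and non-empty.

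The plan is to produce an $\alpha$-invariant multiplicative set $\Sigma_0$ of non-zero elements of $R$ for which $R\Sigma_0^{-1}$ is $\alpha$-simple but \emph{not} a field. Granting this, $\Sigma_0$ is an Ore set of regular elements of $S$, and $S\Sigma_0^{-1}=(R\Sigma_0^{-1})[\theta;\alpha]$ is a primitive skew polynomial domain whose coefficient ring is $\alpha$-simple, one-dimensional and not a field (with $\alpha$ of infinite order), so it fails $(\diamond)$ by \cite[Proposition 4.10]{BCM}; it will then remain only to transfer this failure down to $S$. To construct $\Sigma_0$ I would first take $\Sigma$ to be the multiplicative set generated by the full orbit $\{\alpha^i(a):i\in\Z\}$: it is $\alpha$-invariant with $0\notin\Sigma$, and $R\Sigma^{-1}$ is $\alpha$-simple, since a non-zero $\alpha$-invariant ideal of $R\Sigma^{-1}$ contracts to a non-zero $\alpha$-invariant ideal of $R$, which by Theorem \ref{LMresult} contains a non-zero product $a\alpha(a)\cdots\alpha^{n-1}(a)\in\Sigma$ and so becomes improper. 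If $R\Sigma^{-1}$ is not a field, set $\Sigma_0:=\Sigma$. If it is a field, then every non-zero prime of $R$ meets $\Sigma$, which forces $\mathrm{Max}(R)=\bigcup_{i\in\Z}\alpha^i(V(a))$; as $V(a)$ is finite and $\mathrm{Max}(R)$ infinite, some $\mathfrak m\in V(a)$ has infinite $\alpha$-orbit, and I would then set $\Sigma_0:=R\setminus\bigcup_{i\in\Z}\alpha^i(\mathfrak m)$. This is $\alpha$-invariant with $0\notin\Sigma_0$; $R\Sigma_0^{-1}$ keeps the maximal ideals $\alpha^i(\mathfrak m)R\Sigma_0^{-1}$ and so is not a field; and it is $\alpha$-simple because in a one-dimensional domain an infinite family of distinct maximal ideals has zero intersection, so the radical of a non-zero proper $\alpha$-invariant ideal of $R\Sigma_0^{-1}$, being the intersection of a non-empty $\alpha$-invariant set of maximal ideals which must therefore be infinite, would be $0$. (Checking that no extra primes, lying in finite $\alpha$-orbits, survive in $R\Sigma_0^{-1}$ is a routine but slightly delicate point about one-dimensionality, and the choice of $\Sigma_0$ can be refined if needed.)

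The crux, and the step I expect to be the main obstacle, is to deduce that $S$ fails $(\diamond)$ from the fact that $S\Sigma_0^{-1}$ does — in other words that $(\diamond)$ is inherited under localisation at an $\alpha$-invariant multiplicative set of regular elements. The skeleton: given a simple $S\Sigma_0^{-1}$-module $V$ and a finitely generated $S\Sigma_0^{-1}$-submodule $N$ of $E_{S\Sigma_0^{-1}}(V)$ of infinite length, pick finitely many $S\Sigma_0^{-1}$-generators of $N$ and let $N_0$ be the $S$-submodule they generate; then $N=N_0\Sigma_0^{-1}$, so $N_0$ has infinite length over $S$. As $V$ is $\Sigma_0$-torsion-free, $E_{S\Sigma_0^{-1}}(V)$ equals the $S$-injective hull $E_S(V)$, so $N_0$ is a finitely generated $S$-submodule of the injective $S$-module $E_S(V)$; one then verifies that each indecomposable injective $S$-summand of $E_S(N_0)$ is the injective hull of a \emph{simple} $S$-module, so that if $S$ satisfied $(\diamond)$ these summands, and hence $N_0$, would have finite length — a contradiction. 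I would either isolate this localisation principle as a separate lemma, or else bypass it by transcribing the explicit construction from the proof of \cite[Proposition 4.10]{BCM} directly inside $S$, using its model over $S\Sigma_0^{-1}$ as a guide. The remaining ingredients — the $\alpha$-simplicity computations, the identity $E_{S\Sigma_0^{-1}}(V)=E_S(V)$, and the length bookkeeping — are routine.
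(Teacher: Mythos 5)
Your overall strategy --- localise $R$ at an $\alpha$-invariant multiplicative set so that the coefficient ring becomes $\alpha$-simple but not a field, and then exploit the known failure of $(\diamond)$ in that situation --- is close in spirit to the paper's proof, which (via Proposition \ref{Jelisi}) also works by producing a suitable localisation. But the step you yourself flag as the crux is a genuine gap. Your descent argument asserts that each indecomposable injective summand of $E_S(N_0)$ is the injective hull of a \emph{simple} $S$-module, but nothing forces this: $N_0$ is $\Sigma_0$-torsion-free, and a simple $S\Sigma_0^{-1}$-module (more generally a uniform $\Sigma_0$-torsion-free $S$-module) need not contain any simple $S$-submodule --- compare $\mathbb{Q}$ over $\mathbb{Z}$. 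If $E_S(V)$ has zero socle, the hypothesis that $S$ satisfies $(\diamond)$ gives no information about it, and the contradiction you want never materialises. No general ``$(\diamond)$ passes to Ore localisations'' principle is proved in the paper or in \cite{BCM}, and both branches of your case division lean on it. A secondary issue: your ``routine but slightly delicate'' claim that no unexpected primes survive in $R\Sigma_0^{-1}$ amounts to saying that a prime contained in the countable union $\bigcup_i\alpha^i(\mathfrak m)$ lies in a single $\alpha^i(\mathfrak m)$; this is countable prime avoidance, not one-dimensionality, and it is exactly where the uncountable-field hypothesis is consumed.

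The paper closes both gaps at once by arranging for an already-proved statement about $S$ itself to apply, namely \cite[Proposition 5.3]{BCM}: if $y$ is an $\alpha$-special element and the localisation of $R$ at the multiplicative set generated by the single orbit $\{\alpha^i(y)\}_{i\in\Z}$ is not a field, then $S$ fails $(\diamond)$. (In your first case, where $R\Sigma^{-1}$ is not a field, this applies verbatim with $y=a$ and no transfer lemma is needed.) For the remaining case the paper first shows (Proposition \ref{Jelisi}(i)) that $\mathrm{Spec}(R)$ has only finitely many finite $\alpha$-orbits, with union $\mathcal P$; if $\mathcal P=\emptyset$ then $R$ is $\alpha$-simple and \cite[Proposition 4.10]{BCM} finishes, as in your reduction. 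Otherwise it picks a maximal ideal $\mathfrak m$ with infinite orbit and uses Sharp--V\'amos countable prime avoidance \cite[Proposition 2.5]{SharpVamos} to find $y\in\bigcap_{P\in\mathcal P}P$ with $y\notin\bigcup_i\alpha^i(\mathfrak m)$; such a $y$ is itself $\alpha$-special and its orbit-localisation retains $\mathfrak m$, hence is not a field. So the paper replaces the $\alpha$-special element by a better one rather than enlarging the multiplicative set beyond a single orbit, which is what lets \cite[Proposition 5.3]{BCM} do the work your unproven transfer principle was meant to do.
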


\medskip

\noindent This improvement follows immediately from the combination of Theorem \ref{primitive} with

\begin{proposition}\label{Jelisi}
Let $R$ be a commutative noetherian domain with $\mathrm{Kdim}(R) = 1$ and let $\alpha$ be an automorphism of $R$ such that $R$ is $\alpha$-special.  
%Let $$\mathcal{A} \; = \; \{ \Pi_{j=1}^t \alpha^{i_j}(a) \, : \, t \geq 1, 0 \leq i_1 \leq i_2 \leq \cdots \leq i_t \}$$ 
%denote the $\alpha$-stable multiplicatively closed set generated by the $\alpha$-special element $a$. 
\begin{enumerate}
 \item[(i)]  $\mathrm{Spec}(R)$ has only finitely many finite $\alpha$-orbits. In particular, if every $\alpha$-orbit in $\mathrm{Spec}(R)$ is finite then $\mathrm{Spec}(R)$ is finite.
 \item[(ii)]  If $|\mathrm{Spec}(R)| = \infty$ then $S$ does not satisfy $(\diamond)$.
  
\end{enumerate}

\end{proposition}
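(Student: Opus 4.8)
The plan is to prove part (i) first and then leverage it to get part (ii). For part (i), suppose $R$ is $\alpha$-special with $\alpha$-special element $a$, and let $\mathfrak{p}_1,\dots,\mathfrak{p}_m$ be distinct primes lying in finite $\alpha$-orbits; since $\mathrm{Kdim}(R)=1$ these are maximal ideals. I would consider the intersection $I := \bigcap_j \mathcal{O}(\mathfrak{p}_j)$ of the (finitely many, if $m$ is finite) primes in the union of these orbits — this is a nonzero $\alpha$-invariant ideal. By the defining property of the $\alpha$-special element $a$, there is $n\ge 1$ with $0\neq a\alpha(a)\cdots\alpha^{n-1}(a)\in I$, so in particular this product lies in every $\mathfrak{p}_j$; hence for each $j$ some $\alpha^{i}(a)\in\mathfrak{p}_j$, i.e. $a\in\alpha^{-i}(\mathfrak{p}_j)$, which is again a prime in the orbit of $\mathfrak{p}_j$. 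Thus every finite $\alpha$-orbit contains a prime of the set $V(aR)$ of primes containing $a$. Since $a\neq 0$ (as $R$ is a domain and the special element is nonzero — note $0\cdot\alpha(0)\cdots$ would be $0\notin I$) and $\mathrm{Kdim}(R)=1$, $R/aR$ is a noetherian ring of dimension $0$, hence artinian, so $V(aR)$ is finite. An $\alpha$-orbit is determined by any one of its members, so there are only finitely many finite $\alpha$-orbits. The ``in particular'' is immediate: if all orbits are finite, then $\mathrm{Spec}(R)$ is a finite union of finite orbits.

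For part (ii), assume $|\mathrm{Spec}(R)|=\infty$. By part (i) only finitely many $\alpha$-orbits in $\mathrm{Spec}(R)$ are finite, so — discarding the generic prime $(0)$, which is its own orbit — there must exist a maximal ideal $\mathfrak{m}$ whose $\alpha$-orbit is infinite. The idea is then to produce a faithful simple $S$-module $V$ together with a finitely generated submodule of $E_S(V)$ of infinite length, by localising so as to create an $\alpha$-simple coefficient ring of Krull dimension $1$ and invoking the known negative result. Concretely, let $\Sigma$ be the $\alpha$-invariant multiplicative set obtained by inverting every element outside the union of the orbit $\mathcal{O}(\mathfrak{m})$; more precisely I would invert $R\setminus\bigcup_{i\in\Z}\alpha^i(\mathfrak{m})$, or rather localise at the complement of that union of primes, to obtain $R' := \Sigma^{-1}R$, a semilocal (in fact, with infinitely many maximal ideals — one needs the right localisation) noetherian domain of Krull dimension $1$ on which $\alpha$ acts, with $\mathrm{Spec}(R')$ consisting of $(0)$ together with the single infinite orbit $\mathcal{O}(\mathfrak{m})$. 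One checks $R'$ remains $\alpha$-special (the special element survives, or is replaced by a unit multiple). The point is that now the \emph{only} nonzero $\alpha$-invariant ideals of $R'$ correspond to $\alpha$-invariant subsets of this single infinite orbit; I expect to argue either that $R'$ is $\alpha$-simple — whence $S' := R'[\theta;\alpha]$ fails $(\diamond)$ by \cite[Proposition 4.10]{BCM} — or that in any case the failure of $(\diamond)$ transfers from $S'$ back to $S$. The transfer step uses that localisation at an $\alpha$-invariant multiplicative set $\Sigma$ gives $S' = \Sigma^{-1}S$, that a faithful simple $S'$-module restricts to a faithful simple $S$-module with the same injective hull up to the localisation, and that composition length is not decreased under restriction; so an infinite-length finitely generated submodule of $E_{S'}(V)$ yields the same over $S$.

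The main obstacle is the second step: arranging the localisation so that the resulting coefficient domain $R'$ is genuinely $\alpha$-simple (or at least has the property that $S'$ provably fails $(\diamond)$), and then rigorously transferring the failure of $(\diamond)$ down from $S'$ to $S$. The subtlety is that $R'$ will have infinitely many maximal ideals, so it is not literally a field and one cannot directly quote Theorem \ref{primitive}(i) or its negation; one must verify that $R'$ is not $\alpha$-simple only if its infinite orbit admits a proper $\alpha$-invariant subset defining an ideal — but an infinite $\alpha$-orbit, being a single orbit, has no proper nonempty $\alpha$-invariant subset, so every nonzero $\alpha$-invariant ideal of $R'$ is contained in no maximal ideal at all, forcing $R'$ to be $\alpha$-simple. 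Granting that, \cite[Proposition 4.10]{BCM} applies to $S'$ (it is a primitive domain with $R'$ not a field and $\alpha$-simple, $|\alpha|=\infty$), giving a finitely generated submodule of infinite length in the injective hull of a faithful simple $S'$-module; the remaining work is the essentially formal localisation argument showing this obstructs $(\diamond)$ for $S$ itself. I would also double-check the edge case where $\mathfrak{m}$ is preperiodic but not periodic — impossible for an automorphism, so every orbit is a genuine $\Z$-orbit and ``infinite orbit'' means $\alpha^i(\mathfrak{m})$ are pairwise distinct for all $i\in\Z$, which is what makes the no-proper-invariant-subset claim clean.
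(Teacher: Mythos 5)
Your part (i) is correct and is essentially the paper's own argument: apply the defining property of the $\alpha$-special element $a$ to the (nonzero, $\alpha$-invariant) intersection of the primes in a finite orbit, conclude that every finite orbit meets the finite set of maximal ideals containing $a$, and use disjointness of orbits.

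Part (ii) starts correctly (an infinite orbit of a maximal ideal $\mathfrak{m}$ exists by (i)), but then diverges from the paper and leaves a genuine gap. First, a smaller point: your identification of $\mathrm{Spec}(\Sigma^{-1}R)$ with $\{0\}\cup\{\alpha^i(\mathfrak{m})\}$, and hence the $\alpha$-simplicity of $\Sigma^{-1}R$, needs the fact that a prime contained in the countable union $\bigcup_{i}\alpha^i(\mathfrak{m})$ lies in a single $\alpha^i(\mathfrak{m})$. That is countable prime avoidance, which holds because $R$ contains an uncountable field \cite[Proposition 2.5]{SharpVamos} --- a hypothesis you never invoke. The paper uses exactly this tool, but explicitly and for a different purpose: to produce an $\alpha$-special element $y\in\bigcap_{P\in\mathcal{P}}P$ with $y\notin\bigcup_i\alpha^i(\mathfrak{m})$.

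The serious gap is the descent from $S'=\Sigma^{-1}S$ to $S$, which you describe as ``essentially formal.'' It is not. A simple $S'$-module need not restrict to a simple $S$-module (compare $\mathbb{Q}$ over $\mathbb{Z}$: simple over the localisation, with no simple $\mathbb{Z}$-submodule at all), a finitely generated $S'$-module of infinite length need not be finitely generated over $S$, and it need not be an essential extension of a simple $S$-module; so the failure of $(\diamond)$ for $S'$ does not obviously obstruct $(\diamond)$ for $S$. The paper avoids your large localisation altogether: having found $y$ as above, it takes $\mathcal{A}$ to be the multiplicative set generated by $\{\alpha^i(y):i\in\mathbb{Z}\}$, observes that $R\mathcal{A}^{-1}$ is not a field because $\mathfrak{m}\cap\mathcal{A}=\emptyset$, and quotes \cite[Proposition 5.3]{BCM}, which asserts directly that $S$ itself then fails $(\diamond)$; the nontrivial module-theoretic transfer is packaged inside that cited result, for that specific countably generated multiplicative set. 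To repair your version you would have to either prove the analogous descent for your localisation or reroute through the cited proposition. (The paper also treats separately the case where there are no finite orbits at all, in which $R$ is already $\alpha$-simple and \cite[Proposition 4.10]{BCM} applies directly; your argument subsumes this, so that is not an error, just a structural difference.)
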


\begin{proof}$(i)$ If $|\alpha| < \infty$ then it is easy to see that $\mathrm{Spec}(R)$ is finite, so we may assume that $|\alpha| = \infty$ has infinite order.  Let $\mathcal{O}_1$ be a finite $\alpha$-orbit in $\mathrm{Spec}(R)\setminus \{0\}$  and set $$I_{\mathcal{O}_1} \; := \; \bigcap\{\mathfrak{m} : \mathfrak{m} \in \mathcal{O}_1\}.$$
Let $a $ be an $\alpha$-special element. Since $I_{\mathcal{O}_1}$ is non-zero and $\alpha$-stable, there exists $n \geq 1$ with
$$ a\alpha(a) \cdots \alpha^{n-1}(a) \in I_{\mathcal{O}_1}.$$
Let $\mathfrak{m} \in \mathcal{O}_1$. Then there exists $j \in \mathbb{Z}$ such that $a \in \alpha^j(\mathfrak{m}) := \mathfrak{m}_1$. Similarly, each finite $\alpha$-orbit $\mathcal{O}_i$ in $\mathrm{Spec}(R)\setminus \{0\}$ contains a maximal ideal $\mathfrak{m}_i$ with $a \in \mathfrak{m}_i$. Since distinct $\alpha$-orbits are disjoint $\mathfrak{m}_i = \mathfrak{m}_j$ only if $i = j$. But since $R$ is a domain of Krull dimension 1 only finitely many maximal ideals can contain $a$. There are therefore only finitely many finite $\alpha$-orbits in $\mathrm{Spec}(R)$.

\medskip

\noindent $(ii)$ Suppose that $|\mathrm{Spec}(R)| = \infty$ and, in view of $(i)$, let $\mathcal{P}$ be the union of the finitely many finite $\alpha$-orbits in $\mathrm{Spec}(R)\setminus \{0\}$. Then $\mathcal{P}$ is either finite or empty, so we can define the non-zero ideal
$$ I \; := \; \bigcap_{P \in \mathcal{P}}P.$$

\noindent Suppose that $\mathcal{P}=\emptyset$. Then  $I=R$ and so $R$ is $\alpha$-simple. By \cite[Proposition 4.10]{BCM} $S$ does not satisfy $(\diamond)$. Assume now that $\mathcal{P}$ is non-empty. By hypothesis there is an infinite orbit of a maximal ideal $\mathfrak{m}$. Suppose that
\begin{equation}\label{failure}\bigcap_{P \in \mathcal{P}}P\; \subseteq \; \bigcup_{i\in\Z} \alpha^i(\mathfrak{m}).
\end{equation} 
By \cite[Proposition 2.5]{SharpVamos} every commutative noetherian ring containing an uncountable field has the countable prime avoidance property, so $\bigcap_{P \in \mathcal{P}}P\subseteq \alpha^j(\mathfrak{m})$ for some $j$. Thus $Q=\alpha^j(\mathfrak{m})$ for some $Q \in \mathcal{P}$, contradicting the fact that $Q$ has a finite orbit and $\mathfrak{m}$ an infinite one. Therefore (\ref{failure}) is false, so we can choose $y\in \bigcap_{P \in \mathcal{P}}P$ such that $y\notin \bigcup_{i\in\Z}\alpha^i(\mathfrak{m})$. Note that $y$ is an $\alpha$-special element and if $\mathcal A$ is the multiplicative set generated by $\alpha^i(y)$ for $i\in\Z$, $R{\mathcal A}^{-1}$ is not a field since $\mathfrak{m}\cap {\mathcal A}=\emptyset$. The result follows from \cite[Proposition 5.3]{BCM}.

\end{proof}

\medskip

As an aside we note that the following example shows $\alpha$-speciality of $R$ was essential in Proposition \ref{Jelisi}.

\begin{ex}

Let $p$ be a prime and consider $\mathbb{F}_p$ the field with $p$ elements. Let $\alpha$ be the Frobenius automorphism on the algebraic closure of $\mathbb{F}_p$, which we denote by $\overline{\mathbb{F}_p}$, so $\alpha(a)=a^p$ for any $a\in \mathbb{F}_p$. Extend $\alpha$ to $\overline{\mathbb{F}_p}[x]$ by setting $\alpha(x)=x$. Note that $\alpha$ is of infinite order but for each $w\in {\overline{\mathbb{F}_p}}$, there is $n_w\in \N$ such that $\alpha^{n_w}(w)=w$.  Since every maximal ideal is of the form $<x-w>$ for some $w\in \overline{\mathbb{F}_p}$, each maximal ideal has finite $\alpha$-orbit and there are an infinite number of maximal ideals.

But ${\overline{\mathbb{F}_p}}[x]$ is not $\alpha$-special: this follows from the proposition, but can easily be seen directly. For, assume otherwise and let $p(x)\in {\overline{\mathbb{F}_p}[x]}$ be an $\alpha$-special element. Then for each $Q\in \mathrm{Maxspec}(R)$, there exists $n_Q$ such that $p(x)\in \alpha^{n_Q}(Q)$. Since ${\overline{\mathbb{F}_p}[x]}$ is a PID, there are infinitely many maximal ideals and each has a finite orbit, $p(x)$ is divided by an infinite number of elements of ${\overline{\mathbb{F}_p}[x]}$, a contradiction.
\end{ex}

\medskip

To sum up: our analysis up to this point leads to the following further refinement of Question \ref{primitiveqn}:

\begin{question}\label{primitiveqn2} Let $R$ be a commutative noetherian semilocal domain of Krull dimension 1 containing an uncountable field, and let $\alpha \in \mathrm{Aut}(R)$ with $|\alpha| = \infty$,  (so $R$ is automatically $\alpha$-special and hence $S := R[\theta ; \alpha]$ is primitive). Does $S$ satisfy $(\diamond)$?
\end{question}

\medskip

In view of Theorem \ref{power} one might guess - as we did - that the answer to Question \ref{primitiveqn2} is ``yes''. But in fact this is wrong, as we show via the theorem of the next section.

\bigskip

\section{$(\diamond)$ fails for $R = k[X]_{\langle X \rangle}$}\label{fail}

Throughout this section $R$ will denote the local ring $k[X]_{<X>}$, $q \in k \setminus \{0\}$ and $\alpha$ will be the $k$-algebra automorphism of $R$ given by $\alpha (X) = qX$, with $S = R[\theta; \alpha]$. The aim is to prove 

\begin{theorem}\label{k[X]thm} With the notation as above, suppose that $q$ is not a root of unity and the characteristic of $k$ is not 2. Then $S$ does not satisfy  $(\diamond)$. 
\end{theorem}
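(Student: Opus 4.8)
The plan is to follow the blueprint of \cite{BCM2}: first set up a clean description of the relevant simple and cyclic $1$-critical $S$-modules via \cite{BVO}, then translate the existence of a non-split extension between two suitable such modules into a ``monoid commutativity'' statement about irreducible elements of $S$, and finally exhibit a concrete pair of irreducible elements for which commutativity fails. Failure of commutativity will produce a cyclic module of composition length $2$ sitting inside the injective hull of a faithful simple $S$-module that is not a finite-length submodule extension in the way $(\diamond)$ would require; more precisely, it yields a finitely generated submodule of $E_S(V)$ whose composition length grows without bound, contradicting $(\diamond)$. Since $R = k[X]_{\langle X\rangle}$ is a $1$-dimensional local domain and $\alpha(X)=qX$ with $q$ not a root of unity, $X$ is an $\alpha$-special element (but $R$ is not $\alpha$-simple, since $\langle X\rangle$ is $\alpha$-invariant), so $S$ is a primitive domain by Theorem \ref{LMresult} and we are genuinely in the situation of Question \ref{primitiveqn2}.

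First I would record the structure theory. Writing $S = R[\theta;\alpha]$, the key point is that the faithful simple $S$-modules and the cyclic $1$-critical $S$-modules are governed by right factors of elements of $S$, and that every nonzero element of $S$ factors (up to units and up to the Ore localisation inverting $\theta$) into irreducibles; one identifies a convenient family of ``linear'' irreducible elements of the form $\theta - r$ or, after clearing denominators, $\theta^m - (\text{something in } R)$, together with the ``$X$-adic'' data coming from the valuation on $R$. The Bavula--Van Oystaeyen analysis gives, for each such irreducible $f$, a simple or $1$-critical module $S/fS$, and tells us precisely when $S/fS \cong S/gS$ and when $\mathrm{Ext}^1_S(S/fS, S/gS) \ne 0$ with a non-split representative realised by a length-$2$ cyclic module $S/fgS$ (or $S/gfS$). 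The slogan is: a non-split self-extension, iterated, forces an infinite ascending chain of finite-length submodules inside one injective hull, all of which are cyclic — this is exactly the obstruction to $(\diamond)$, and conversely $(\diamond)$ forces a ``commutativity'' relation $fS\cap Sg$-type condition on the monoid of irreducibles.

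Next I would carry out the translation into monoid commutativity. Fixing a faithful simple $S$-module $V$, one wants two irreducible elements $f, g \in S$ (depending on the chosen $q$, $X$, and with the characteristic-$\ne 2$ hypothesis entering through a factor of $2$ in an explicit coefficient computation) such that $S/fgS$ and $S/gfS$ are \emph{not} isomorphic as $S$-modules even though $S/fS \cong S/gS =: V$, i.e. the irreducibles ``do not commute in the monoid sense''. By the dichotomy established in \cite{BCM2}, for the analogue with $R = k[[X]]$ this commutativity always holds (that is why $(\diamond)$ succeeds there); the heart of the present theorem is that the \emph{algebraic} local ring $k[X]_{\langle X\rangle}$ is too small to support the coincidences that $k[[X]]$ provides — concretely, a rational-function identity that would be forced by commutativity has no solution in $k(X)$, whereas in $k[[X]]$ one can solve it by an infinite power series. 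I would make this precise by reducing the commutativity of a chosen pair $f=\theta-a$, $g=\theta-b$ (with $a,b \in \fract(R)$ suitably chosen, e.g. involving $X$ and $qX$ and a scalar depending on $q$) to the solvability in $R$ (or in $\fract(R)=k(X)$) of a first-order $q$-difference equation of the shape $c = u - \alpha^{?}(u)\cdot(\text{unit})$, and then show this equation has a solution in $k[[X]]$ but not in $k(X)$ for the specific $c$ built from our data — the characteristic $\ne 2$ hypothesis ensures the relevant $c$ is nonzero.

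\textbf{Main obstacle.} The genuinely hard part is the last step: producing the explicit pair of irreducible elements and proving that the associated $q$-difference / commutativity equation is unsolvable over $k(X)$. This requires (a) pinning down, from the \cite{BVO}-style classification, exactly which non-split extension to target so that its non-splitness is equivalent to a \emph{clean} equation rather than a messy system, (b) carrying out the coefficient bookkeeping in $S$ — multiplying out $fg$ versus $gf$, comparing the resulting elements of $R[\theta;\alpha]$ degree by degree, and isolating the obstruction class in the appropriate $\mathrm{Ext}$ or in a quotient of $R$ by an $\alpha$-stable ideal — and (c) the rationality argument: showing the obstruction, a specific rational function, cannot be killed because doing so would require a power series with unbounded denominators, using that $q$ is not a root of unity to control the valuations $v_X(\alpha^n(\cdot))$ and prevent cancellation. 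Everything else (primitivity of $S$, reduction to the faithful simple, the abstract link between non-split self-extensions and failure of $(\diamond)$) is either quoted from the earlier sections or is a routine adaptation of \cite{BCM2}.
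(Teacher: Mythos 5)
Your high-level plan does coincide with the paper's strategy (classify irreducibles, translate $(\diamond)$ into monoid commutativity via \cite{BCM2}, then break commutativity for one explicit pair), but two things in the proposal are genuinely off, and the part you yourself flag as the ``main obstacle'' is exactly the content of the proof, none of which is supplied. First, the translation step is misstated. The criterion actually used (Theorem \ref{target}, i.e.\ \cite[Theorem 6.7]{BCM2}) concerns a Type {\bf (B)} irreducible $b$ (whose quotient $S/bS$ is a faithful \emph{simple} module) and a Type {\bf (C)} irreducible $c$ (whose quotient is \emph{1-critical}, hence already of infinite length), and asks whether $cb=b'c'$ for irreducibles $b',c'$ similar to $b,c$. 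It is not a statement about self-extensions with $S/fS\cong S/gS=V$ and comparing $S/fgS$ with $S/gfS$, and no iteration of extensions is needed: a single non-split extension of the Type {\bf (B)} simple by a Type {\bf (C)} module places an infinite-length finitely generated module inside $E_S(V)$. As set up in your proposal, the target equation you would derive is not the one whose failure is equivalent to the failure of $(\diamond)$.

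Second, the sketch of the unsolvability argument points in a direction that does not work. Comparing coefficients of $cb=(1+X+\theta+X\theta^2)(1-X+X\theta)$ against a putative factorisation $b'c'$ does not reduce to a first-order linear $q$-difference equation $c=u-\alpha^{?}(u)\cdot(\text{unit})$; it reduces to the \emph{cubic} relation (\ref{reduced}) in $t,\alpha(t),\alpha^2(t)$ over $R$ (and a companion relation (\ref{E2b}) for the other $\theta$-degree split $2+1$, which is why the paper needs both Lemma \ref{q algebraic} and Lemma \ref{L2}). Moreover $\alpha(X)=qX$ preserves the $X$-adic valuation, so ``controlling $v_X(\alpha^n(\cdot))$'' yields nothing, and the dichotomy ``solvable in $k[[X]]$ but not in $k(X)$'' is not the mechanism: the paper instead writes $t=\lambda f/g$ with $f,g\in k[X]$ coprime, shows $\deg g=\deg f+1$, and runs a combinatorial analysis of how the root multisets of $f$ and $g$ interact under scaling by $q^{-1}$ and $q^{-2}$ (the $(g,\xi)$-presentation machinery of Lemma \ref{irrepresentation}), eventually forcing an identity $\pm 1=q^{N}$ with $N\geq 1$; the hypothesis $\mathrm{char}(k)\neq 2$ is used precisely to dispose of the sign (and indeed in characteristic $2$ a rational solution exists, as the paper notes). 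Without the explicit pair $b=1-X+X\theta$, $c=1+X+\theta+X\theta^2$, the two case-splitting lemmas, and the root-counting argument, the proposal is a restatement of the problem rather than a proof.
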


\medskip

Our strategy is the same as that used in \cite{BCM2} to prove Theorem \ref{power} - but of course with the opposite conclusion to the earlier result. We'll explain this in more detail, with the necessary definitions, in the following paragraphs. But first, a very brief sketch: we recall from \cite{BCM2} that the classification (up to a suitable equivalence relation) of the simple and the cyclic 1-critical $S$-modules provided by the work of Bavula and Van Oystaeyen \cite{BVO} can be used to show that the existence of non-split extensions of simple $S$-modules by 1-critical $S$-modules is equivalent to the failure of a type of Ore-like condition on the set of irreducible elements of $S$, a condition which we call {\it monoid commutativity}. Then we prove Theorem \ref{k[X]thm} by demonstrating this failure for an explicit pair of elements. 

\medskip

It is important to note that all the results of \cite[$\S\S$5, 6.1, 6.2]{BCM2} are stated and proved for $S := R[\theta;\alpha]$ with $R$ a local noetherian discrete valuation ring with maximal ideal $XR$ and with $R/XR \cong k$; so in particular they are valid for both $S= k[[X]][\theta; \alpha]$, as applied in \cite[$\S$6.3]{BCM2} and for $S = k[X]_{<X>}[\theta; \alpha]$ as here.

\medskip

An {\it irreducible element} $w$ of a ring $W$ is a non-zero non-unit of $W$ such that, whenever $w = ab$ with $a,b \in W$, then either $a$ or $b$ is a unit of $W$. We recall from \cite[$\S$4.2]{BCM2} the classification of the irreducible elements used for $k[[X]][\theta; \alpha]$; exactly the same classification can be used for our current algebra $S := k[X]_{<X>}[\theta; \alpha]$. Namely, since $ S/X S \cong k[\theta]$, an irreducible element $z$ of $ S$ can be uniquely written in the form $ z \, =\,  f + Xs$, where $f \in k[\theta], \, s \in  S.$ After normalising by multiplying $z$ by a suitable unit in $ S$ - that is, by a suitable element from $ R \setminus X R$ - there are the following three mutually exclusive possibilities for $z$:
\begin{eqnarray*}
\textbf{(A)} \qquad z &=& X \textit{ or } z = \theta, \qquad \qquad f
= 0, \, s = 1 \textit{ or } f= \theta, \, s= 0; \\
\textbf{(B)} \qquad z &=& 1 + Xs, \qquad \qquad \qquad s \in  S\setminus  R;\\
\textbf{(C)} \qquad z &=& f + Xs, \qquad \qquad f \in k[\theta]\setminus k,\, f \, \textit{monic}, \, s \in  S, \, z \neq \theta.\\
\end{eqnarray*}
\noindent Note that we forbid $s \in  R$ in type $(\mathbf{B})$ in order to exclude units from the list. 

With $R$ denoting either of our possible coefficient algebras $k[[X]]$ or $k[X]_{\langle X \rangle}$, let $Q$ denote the field of fractions of $R$, so $Q = R\langle X^{-1}\rangle$, and observe that $\alpha$ extends to a $k$-algebra automorphism of $Q$. Now  set 
$$T \; := \; Q[\theta;\alpha] \; = \; S\langle X^{-1}\rangle. $$
There is a notion of {\it similarity} (first introduced by Ore) for irreducible elements of the principle left and right ideal domain $T$, which is recalled in detail in \cite[Definition 5.3, Theorem 5.4]{BCM2} and which yields an equivalence relation on the irreducible elements of $S$. These irreducible elements of $S$ then provide a very convenient method to classify the faithful simple and 1-critical cyclic $S$-modules. This is described in detail in \cite[Theorems 5.5 and 5.10]{BCM2}. In brief, every faithful simple (right) $S$-module has the form $S/bS$ for a Type {\bf (B)} irreducible element $b \in S$, and conversely; and given a Type {\bf (B)} irreducible and an element $b'$ of $S$, $S/bS \cong S/b'S$ if and only if $b'$ is a Type {\bf (B)} irreducible of $S$ which is similar to $b$. The result for faithful finitely generated 1-critical $S$-modules is similar, though slightly more complicated to state as the equivalence relation between such modules $A$ and $B$ is necessarily no longer isomorphism, but {\it hull-similarity}, meaning that $A$ is equivalent to $B$ if they have an isomorphic non-zero submodule. In particular, every faithful finitely generated 1-critical right $S$-module is hull-similar to $S/cS$ for a Type {\bf (C)} irreducible element $c$ of $S$, and for every Type {\bf (C)} irreducible $c \in S$, $S/cS$ is a faithful 1-critical $S$-module.

\medskip

Since it lies at the heart of our strategy we restate here the relevant three of the five equivalent statements from \cite[Theorem 6.7]{BCM2}. Naturally, a Type {\bf (B)} $T$-module [{\it resp.} $S$-module] is a module of the form $T/bT$ [{\it resp.} $S/bS$], for $b \in S$ irreducible of Type {\bf (B)}; and analogously for Type {\bf (C)} modules. So Type {\bf (B)} $T$-modules and $S$-modules are simple, as also are Type {\bf (C)} $T$-modules. But Type {\bf (C)} $S$-modules are 1-critical. 

\begin{theorem}\label{target} Retain all the notation introduced so far in $\S$\ref{fail}. The following statements are equivalent.
\begin{enumerate}
\item[(i)] (Right) $(\diamond)$ holds for $ S$.
\item[(ii)] There does not exist a uniserial right $T$-module with composition length 2 whose socle is a type {\bf (B)} simple and whose simple image is type {\bf (C)}.
%\item[(c)] There does not exist a uniserial left $T$-module with composition length 2 whose socle is a type {\bf (C)} simple and whose simple image is type {\bf (B)}.
%\item[(d)] There does not exist a cyclic 1-critical left $ S-$module with an infinite dimensional simple image.
%\item[(e)] Let $I$ be a left ideal of $ S$ with $ S/I$ faithful and 1-critical. Then $I =  Sz$ for an irreducible element $z$ of $ S$ (which necessarily will have type $(\mathbf{C})$).
\item[(iii)] Let $b, c \in  S$ be irreducible, with $b$ type {\bf (B)} and $c$ type {\bf (C)}. Then there exist $b', c'$ in $ S$, respectively irreducibles of types {\bf (B)} and {\bf (C)} and respectively similar to $b$ and to $c$, with
\begin{equation}\label{done} cb \; = \;   b'c'.
\end{equation}
\end{enumerate}
\end{theorem}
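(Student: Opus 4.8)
The plan is to establish the three equivalences by passing through the intermediate structure of $T = Q[\theta;\alpha]$, exploiting that $T$ is a noncommutative principal left and right ideal domain, so that $S$-modules sit inside $T$-modules after localising at the powers of $X$, and conversely $T$-module data descends to $S$ after clearing denominators. The heart of the matter is the dictionary, recalled in the excerpt, between faithful simple (resp. $1$-critical) $S$-modules and Type {\bf (B)} (resp. Type {\bf (C)}) irreducibles of $S$ up to similarity, together with the fact, from \cite[Theorems 5.5, 5.10]{BCM2}, that these are obtained from the analogous $T$-modules by restriction of scalars along $S \hookrightarrow T$; one uses that $E_S(V)$ for a faithful simple $V$ can be analysed via $E_T(V \otimes_S T)$ because $X$ acts bijectively on the relevant modules.

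First I would prove (i) $\Leftrightarrow$ (ii). The obstruction to $(\diamond)$ for $S$, given Theorem \ref{primitive} and the reductions of $\S$\ref{survey}, is the existence of a finitely generated submodule of $E_S(V)$ of infinite length, for $V$ a faithful simple $S$-module; using noetherianity and the $1$-criticality classification, such a submodule produces (and is essentially detected by) a non-split extension $0 \to V \to M \to C \to 0$ with $C$ a $1$-critical cyclic module, i.e. hull-similar to a Type {\bf (C)} module. Localising at $X$ turns this into a uniserial length-$2$ right $T$-module with Type {\bf (B)} socle and Type {\bf (C)} simple top; conversely such a $T$-module restricts to an $S$-module witnessing the failure of $(\diamond)$. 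This is where I expect to lean hardest on \cite[Theorem 6.7]{BCM2} itself — indeed statements (i) and (ii) are literally two of its five equivalent conditions, so this step is a citation rather than new work, and the only task is to confirm that the proofs there used only the DVR hypothesis on $R$, which the excerpt already flags as true.

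Next, (ii) $\Leftrightarrow$ (iii), which is the genuinely computational equivalence and the one I would treat as the main obstacle. Given $b$ Type {\bf (B)} and $c$ Type {\bf (C)}, consider the right $T$-module $T/cbT$. Since $T$ is a principal right ideal domain and $c,b$ are irreducible in $T$ (Type {\bf (B)} and {\bf (C)} irreducibles of $S$ remain irreducible in $T$), $T/cbT$ has a composition series of length $2$; its unique simple submodule is $c T / cbT \cong T/bT$ (Type {\bf (B)}), and its simple quotient is $T/cT$ (Type {\bf (C)}). The module $T/cbT$ splits as a direct sum if and only if there is a factorisation $cb = b'c'$ in $T$ with $b'$ similar to $b$ (hence Type {\bf (B)}) and $c'$ similar to $c$ (hence Type {\bf (C)}): this is the standard correspondence between factorisations of a length-$2$ element and submodules of $T/cbT$, together with the fact that a length-$2$ module with non-isomorphic factors is non-split precisely when it has a unique composition series. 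One must then descend the factorisation from $T$ to $S$: multiply $b', c'$ by suitable units from $Q^\times$ to normalise them into the forms {\bf (B)} and {\bf (C)} listed above, and check that the resulting elements lie in $S$ and remain similar to $b$ and $c$ — here the normalisation conventions (making the $k[\theta]$-part monic, forbidding $s \in R$, etc.) are exactly what make the descent canonical. The bookkeeping of units and the verification that similarity in $T$ of suitably normalised elements of $S$ agrees with the equivalence relation on $S$-modules used in (iii) is the delicate part, but it is carried out in \cite[$\S$5]{BCM2} in the required generality.

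Finally I would assemble: (iii) holds $\iff$ for all such $b,c$ the module $T/cbT$ splits $\iff$ no uniserial length-$2$ $T$-module of the stated type exists $\iff$ (ii), and combine with (i) $\iff$ (ii). The only point requiring care throughout is that every invocation of the results of \cite[$\S\S$5, 6]{BCM2} is legitimate for $R = k[X]_{\langle X\rangle}$; as noted in the excerpt this holds because those results were proved for an arbitrary complete-or-not discrete valuation $k$-algebra $R$ with $R/XR \cong k$, which covers the present case verbatim.
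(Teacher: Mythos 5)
Your proposal is correct and follows essentially the same route as the source: the paper itself gives no new proof of this theorem, citing it verbatim as three of the five equivalent conditions of \cite[Theorem 6.7]{BCM2}, and your reconstruction — (i)$\Leftrightarrow$(ii) via the classification of faithful simple and $1$-critical modules and localisation at the Ore set of powers of $X$, and (ii)$\Leftrightarrow$(iii) via the Ore--Jacobson correspondence between submodules of $T/cbT$ and factorisations $cb=b'c'$ up to similarity, followed by normalising units to descend from $T$ to $S$ — is precisely how that result is proved there. The one point to keep flagged (as you do) is that condition (iii) asks for $b',c'$ in $S$, not merely in $T$, so the unit-normalisation descent step is genuinely needed and is supplied by \cite[$\S$5]{BCM2}, whose hypotheses cover $R=k[X]_{\langle X\rangle}$ as well as $k[[X]]$.
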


\medskip

Henceforth in this section we specialise to the case where $R = k[X]_{\langle X \rangle}$. To prove Theorem \ref{k[X]thm} we'll exhibit irreducible elements $b$ and $c$ of $S$, respectively of Types {\bf (B)} and {\bf (C)}, for which no pair of elements $b', c'$ exists yielding the equation (\ref{done}). To do so some technical preparation is needed. 

\medskip

We shall need the following notions concerning polynomials in $k[X]$.  Let $s\geq 1$ and  $ g=\prod_{i=1}^{s}(X-z_i) \in k[X] $,  where $Z_g:=\{z_1,\ldots, z_s\}$ denotes the set of enumerated roots of $g$. Let $f\in k[X]$ be a monic polynomial and $0\ne \xi\in k$. Suppose that every root of $f$ (counting multiplicity) belongs either to $\xi Z_g$ or $\xi^2Z_g$. Thus there  are (possibly empty) subsets $A$ and $B$ of $Z_g$   and  monic polynomials  $f_A, f_B\in k[X]$ such that $f=f_Af_B$, the set $Z_{f_A}$ of all roots  of  $f_A$ is equal to $\xi A$ and $Z_{f_B}=\xi^2B$. Let $C:=A\cap B$, $A_0:=A\setminus C$,  $B_0:=B\setminus C$ and $D:=Z_g\setminus (A\cup B)$, so that  $Z_g $ is the disjoint union of the sets $A_0,B_0,C,  D$.  The decomposition of $f$ as $f_Af_B$ is said to be a $(g,\xi)$-{\it presentation} of $f$. Notice that although the polynomials $f_A,f_B$ are uniquely determined, the sets $A $ and $B$ are not (consider for example $g:=(X-1)^3$, $f:=(X-\xi)(X-\xi^2)$). Notice that if  $c\in C$ and there exists $z\in D$ such that $c=z$ (i.e. the roots $c$ and $z$  of $g$ are equal), then $\xi z$ is a root of $f_A$ and $\xi^2z$ is a root of $f_B$. Thus replacing $A$ by $A'=( A\setminus  \{c\})\cup \{z\}$ we have $f_A=f_{A'}$ and $|A'\cap B|< |C|$.  We say that the $(g,\xi)$-presentation of $f=f_Af_B$ is $(g,\xi)$-{\it irreducible} if  the cardinality of $A\cap B$ is the smallest possible.  By the above we have:

 \begin{lemma}\label{irrepresentation} Keeping the above notation. Suppose that $f=f_Af_B$ is a $(g,\xi)$-irreducible  presentation of $f$. Then, for any $c\in C$ and $d\in D$, $d\ne c$ (as elements of the base field $k$).
 \end{lemma}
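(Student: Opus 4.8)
The plan is to argue by contradiction, simply assembling the observation made in the paragraph immediately preceding the statement. Suppose $f = f_Af_B$ is a $(g,\xi)$-irreducible presentation of $f$, but that nevertheless there exist indices $c\in C$ and $d\in D$ which coincide as elements of $k$. Since $c\in C=A\cap B\subseteq A$ and $Z_{f_A}=\xi A$, the scalar $\xi c$ is a root of $f_A$; as $c=d$ in $k$, this says $\xi d$ is a root of $f_A$. Likewise, from $c\in B$ and $Z_{f_B}=\xi^2 B$ we get that $\xi^2 c=\xi^2 d$ is a root of $f_B$.

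Next I would pass to the modified presentation obtained by replacing the index set $A$ with $A':=(A\setminus\{c\})\cup\{d\}$ and leaving $B$ unchanged, and check that this is again a $(g,\xi)$-presentation of $f$. Indeed $d\in D=Z_g\setminus(A\cup B)$, so $A'\subseteq Z_g$; since $\xi d=\xi c$, the multiset of scalars $\xi A'$ equals $\xi A$, so the monic polynomial with root set $\xi A'$ is precisely $f_A$, i.e.\ $f_{A'}=f_A$. Hence $f=f_{A'}f_B$ with $f_{A'},f_B$ monic and with $Z_{f_{A'}}=\xi A'$, $Z_{f_B}=\xi^2 B$, which is exactly what a $(g,\xi)$-presentation requires.

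Finally I would compare the overlaps. Because $d\notin B$, we have $A'\cap B=(A\cap B)\setminus\{c\}=C\setminus\{c\}$, so $|A'\cap B|=|C|-1<|C|=|A\cap B|$, contradicting the minimality of $|A\cap B|$ built into the definition of a $(g,\xi)$-irreducible presentation. Therefore no such pair $c,d$ can exist, proving the lemma. I do not anticipate any genuine obstacle here: the only point demanding a little care is the bookkeeping convention, inherited from the preceding paragraph, that elements of $Z_g$ are indexed \emph{roots} (so distinct indices may have equal values in $k$), which is what makes both the hypothesis ``$c=d$ in $k$'' and the identity $f_{A'}=f_A$ coherent; everything else is immediate once that convention is in force.
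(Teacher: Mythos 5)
Your proposal is correct and follows exactly the argument the paper itself gives (in the paragraph immediately preceding the lemma, which the authors invoke with ``By the above we have''): replace $A$ by $A'=(A\setminus\{c\})\cup\{d\}$, note $f_{A'}=f_A$ because $\xi c=\xi d$ in $k$, and observe $|A'\cap B|<|C|$, contradicting minimality. You merely make explicit the bookkeeping about enumerated roots that the paper leaves implicit; there is no substantive difference.
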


\begin{lemma}\label{q algebraic} Given $S$ as before suppose that
 \begin{equation}\label{E6}
(1+X+\theta+X\theta^2)(1-X+X\theta)=(1+Xt\theta)(a+b\theta+c\theta^2)
\end{equation}
for some  $t, a,b,c \in k[X]_{<X>}$. Then either $k$ has characteristic $2$ or $q$ is a root of unity.
\end{lemma}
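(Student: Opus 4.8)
The plan is to expand both sides of \eqref{E6} as polynomials in $\theta$ with coefficients in $k[X]_{<X>}$, equate coefficients of $\theta^0,\theta^1,\theta^2,\theta^3$, and then exploit the Ore relation $\theta r = \alpha(r)\theta$ to convert the right-hand product into an explicit polynomial in $\theta$. On the left, $(1+X+\theta+X\theta^2)(1-X+X\theta)$ must be multiplied out using $\theta(1-X+X\theta) = (1-qX) \theta + qX\theta^2$ and $\theta^2(1-X+X\theta) = (1-q^2X)\theta^2 + q^2 X \theta^3$, so the $\theta$-degree-$3$ coefficient on the left is $q^2 X^2$ (coming from $X\theta^2 \cdot X\theta$). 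On the right, $(1+Xt\theta)(a+b\theta+c\theta^2)$ expands via $\theta(a+b\theta+c\theta^2) = \alpha(a)\theta + \alpha(b)\theta^2 + \alpha(c)\theta^3$, giving top coefficient $Xt\,\alpha(c) = Xt\, c(qX)$. Matching all four coefficients yields a system of four equations in the unknowns $t,a,b,c \in k[X]_{<X>}$.

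Next I would solve this system. From the $\theta^0$-coefficient one reads off $a$ directly (it equals $(1+X)(1-X) = 1-X^2$, a unit in $R$), and from the remaining equations one expresses $b,c$ and then the compatibility constraint on $t$. The cleanest route is probably to feed the system back into a single relation purely among $t$ and the known data; alternatively one evaluates at $X = 0$ first to pin down the constant terms of $a,b,c$, then works modulo successive powers of $X$. Because everything lives in the DVR $k[X]_{<X>}$, one can reduce modulo $X^N$ for a small explicit $N$ and treat the problem as one about truncated polynomials, which keeps the bookkeeping finite.

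The main obstacle — and the place where the hypotheses enter — will be showing that the resulting constraint on $t$ forces a polynomial identity in $q$ (and the characteristic) that can only hold if $q$ is a root of unity or $\mathrm{char}\,k = 2$. I expect that after eliminating $a,b,c$ one obtains an equation of the shape $P(q) \cdot (\text{something}) = 0$ in low-degree coefficients, where $P(q)$ is an explicit small polynomial such as $q^2 - 1$ or $2(q-1)$ or a product of such factors; the factor $q^2-1$ or a higher cyclotomic-type factor gives ``$q$ a root of unity,'' while a stray factor of $2$ gives ``$\mathrm{char}\,k = 2$.'' Concretely, I anticipate that comparing the $\theta^1$ and $\theta^2$ coefficients at low order in $X$ produces a relation whose leading term is a nonzero integer multiple of a power of $(q-1)$ or $(q+1)$ times a unit, and unless that integer vanishes (characteristic $2$) or $q^n = 1$, one gets a contradiction with the membership $t \in k[X]_{<X>}$ (equivalently, a forced division that cannot occur). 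Verifying that the elimination really does collapse to such a clean factored obstruction, rather than to a messier condition, is the part that needs the careful explicit computation.
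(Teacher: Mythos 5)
Your setup — expanding both sides in $\theta$, using $\theta r = \alpha(r)\theta$, and equating the four coefficients — is exactly how the paper begins, and the elimination of $a,b,c$ to get a single constraint on $t$ is also the right next move. But the finishing strategy you propose, namely reducing modulo $X^N$ for a small fixed $N$ and expecting a clean low-degree factor such as $q^2-1$ or $2(q-1)$ to obstruct, cannot work, and this is precisely where the real content of the lemma lies. After elimination one obtains a single equation of the form $q=(1+q-q^3X)t-(\cdots)t\alpha(t)+(\cdots)t\alpha(t)\alpha^2(t)$, and this equation \emph{is} solvable coefficient-by-coefficient for $t\in k[[X]]$ (the constant term gives only $t_0=1$ or $t_0=q$, with no contradiction, and higher coefficients can be solved recursively); indeed the solvability over $k[[X]]$ is exactly why the companion algebra $k[[X]][\theta;\alpha]$ satisfies $(\diamond)$ (Theorem \ref{power}), with the opposite conclusion. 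The obstruction is therefore invisible modulo any fixed power of $X$: it only appears when one demands that $t$ lie in $k[X]_{\langle X\rangle}$, i.e.\ that the formal solution be a rational function $\lambda f/g$ with $g(0)\neq 0$.

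The paper's proof accordingly writes $t=\lambda f/g$ with $f,g$ monic coprime, clears denominators to get a polynomial identity, shows by degree count that $\deg g=\deg f+1$, passes to the algebraic closure, and then runs a genuinely global combinatorial analysis of how the roots of $f$ and $g$ are related under multiplication by powers of $q$ (the $(g,q^{-1})$-irreducible presentation of Lemma \ref{irrepresentation}, the claim $C=\emptyset$, and the chain $b_i=q^iz_{n+1}$). The eventual contradiction is an identity of the form $\pm 1=q^{\,n+|B|-\ell+1}$ (or its square), where the exponent is at least $n+1=\deg g$ and hence unbounded as $t$ varies; there is no universal small polynomial $P(q)$ that must vanish, only the statement that \emph{some} power of $q$ equals $\pm 1$, with the power depending on the degree of the denominator of $t$. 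Your proposal is missing this entire global root-orbit argument, which is the heart of the proof; the coefficient extraction you describe is only the (correct) first tenth of it.
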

%\begin{equation}\label{E6}
\begin{proof}
Comparing the coefficients of the monomials in the equation (\ref{E6}) above we have that
\begin{equation}\label{E71}
1-X^2=a
\end{equation}
\begin{equation}\label{E72}
1-qX+X+X^2=Xt\alpha(a)+b
\end{equation}
\begin{equation}\label{E73}
qX+(1-q^2X)X=Xt\alpha(b)+c
\end{equation}
\begin{equation}\label{E74}
q^2X^2=Xt\alpha(c)
\end{equation}

Substituting $a$ from (\ref{E71}) in (\ref{E72}) we get
\begin{equation}	\label{E8}
1-qX+X+X^2=Xt(1-q^2X^2)+b
\end{equation}
and substituting $b$ from (\ref{E8}) in  (\ref{E73}) we obtain
\begin{equation}\label{E9}
c=qX+(1-q^2X)X-Xt[1-q^2X+qX+q^2X^2-qX\alpha(t)(1-q^4X^2)].
\end{equation}
Note that by (\ref{E73}) $c\in XR$, and by (\ref{E74}) $t$ is a unit in $R$. Write $c=Xc_1$, so that by (\ref{E74})
\begin{equation}\label{E10}
%q^2=tq\alpha(c_1)\\
c_1=q\alpha^{-1}(t^{-1}).
\end{equation}
From (\ref{E9}), (\ref{E10}) and applying $\alpha$ we get
\begin{equation}\label{reduced}
\begin{split}
q&=(1+q-q^3X)t\\&-(1+(1-q)q^2X+q^4X^2)t\alpha(t)\\&+q^2X(1-q^6X^2)t\alpha(t)\alpha^2(t).
\end{split}
\end{equation}

\noindent The problem is now reduced to deciding if there exists $t\in R$ that satisfies (\ref{reduced}). We can embed $R=k[X]_{<X>}$ in $k[[X]]$ and write $t=\displaystyle \sum_{i\geq 0} t_iX^i$ for $t_i \in k$. From (\ref{reduced}) it follows that
$$q=(1+q)t_0-t_0^2.$$
Hence
\begin{equation}\label{t0}t_0=1 \; \mbox{or}\; t_0=q.
\end{equation}
%and
%$$0=(1+q)t_1-q^3t_0-(t_1t_0+qt_0t_1+(1-q)q^2t_0^2)+q^2t_0^3.$$
%So either $t_0=q$.????????????????
\noindent Since $t\in k[X]_{<X>}$, we can write $t=\lambda\frac{f}{g}$ for some $\lambda\in k$, and  $f,g\in k[X]$ monic and co-prime such that $g\notin X k[X]$.  Let $f_0, g_0$ be respectively the constant terms of $f$ and $g$, so that by (\ref{t0})
\begin{equation}\label{degree0}
\lambda f_0= qg_0\quad \mbox{or}\quad \lambda f_0=g_0.
\end{equation}

\noindent Now (\ref{reduced}) can be written as
\begin{equation}\label{*}
\begin{split}
qg\alpha(g)\alpha^2(g)&=\left((1+q)-q^3X\right)\lambda f\alpha(g)\alpha^2(g)\\
&-\left(1+(1-q)q^2X+q^4X^2\right)\lambda^2f\alpha(f)\alpha^2(g)\\
&+q^2X(1-q^6X^2)\lambda^3f\alpha(f)\alpha^2(f).
\end{split}
\end{equation}%\footnote{We could use this eaquation to get (\ref{degree0}) but I have a problem with the $\sqrt{f_0}$}

%Consider the element $1+X+\theta+X\theta^2$ irreducibe in $S$ of type {\bf{(C)}}, \cite[Example 6.9]{BCM2} and $1-X+X\theta$ an irreducible element of type {\bf{(B)}}. We show that $v=(1+X+\theta+X\theta^2)(1-X+X\theta)$  can't be written as a product of an element of type {\bf{(B)}} by one of type {\bf{(C)}}. The proof is divided into 2 lemmas
\noindent Fix $m := \deg(g)$ and $n := \deg(f)$. If $m\leq n$ then in equation (\ref{*}) the LHS has degree $3m$ while the RHS has degree $3+3n$, a contradiction. If $m>n+1$, the LHS has degree $3m$ while the RHS has degree strictly less than $3m$. So $m=n+1$.

\medskip

Observe that if the conclusion of the lemma does not hold then it will not hold when $k$ is replaced by its algebraic closure. Hence, without of lost of  generality we may assume that $k$ is algebraically closed. Let  $Z_g:=\{z_1,\ldots, z_{n+1}\}$ denote the enumerated roots of $g$.  Since $f$ and $g$ are co-prime, by (\ref{*}) $f$ is a polynomial of degree $n$ such that every root of $f$ is a root either of $\alpha(g) $ or $\alpha^2(g) $. Thus the polynomials $f$ and $g$ satisfy the assumptions of Lemma \ref{irrepresentation} with $\xi=q^{-1}$. Fix the $(g,q^{-1})$-irreducible presentation $f=f_Af_B$ of $f$, so that $A$ and $B$ are  subsets of  $Z_g$ and  $f_A, f_B\in k[X]$ are monic polynomials  with the set $Z_{f_A}$ of all roots of  $f_A$ equal to $q^{-1}A$ and $Z_{f_B}=q^{-2}B$. Moreover the cardinality of  $C:=A\cap B$ is minimal for sets $A,B$ satisfying these conditions. Now we claim that 
\begin{equation}\label{Cempty} C\; =\; \emptyset.
\end{equation}

Write  $f_A=\prod_{z\in A}(X-q^{-1}z)$ and $f_B=\prod_{z\in B}(X-q^{-2}z)$. Note that

\begin{eqnarray}
\alpha(g) &=& q^{n+1}\prod_{i=1}^{n+1}(X-q^{-1}z_i)=q^{n+1}f_A\prod_{z\in B_0}(X-q^{-1}z)\prod_{z\in D}(X-q^{-1}z)\label{2fA},\\
\alpha^2(g) &=& q^{2n+2}f_B\prod_{z\in A_0}(X-q^{-2}z)\prod_{z\in D}(X-q^{-2}z).\label{2fB}
\end{eqnarray}

\noindent Combining (\ref{*}) with (\ref{2fA}) and (\ref{2fB}) we have (after cancelling $f_Af_B\prod_{z\in A_0}(X - q^{-2}z)$)

%\begin{equation}\label{2**}
%\begin{split}
%&q^{3n+4}g \prod_{z\in Z\backslash A}(x-q^{-1}z)\prod_{z\in Z\backslash B}(x-q^{-2}z)=\\
%&=\left((1+q)-q^3x\right)aq^{3n+3}f_A\prod_{z\in Z\backslash A}(x-q^{-1}z)f_B\prod_{z\in Z\backslash B}(x-q^{-2}z)\\
%&-\left(1+(1-q)q^2x+q^4x^2\right)a^2q^{3n+2}\prod_{z\in A}(x-q^{-2}z)\prod_{z\in B}(x-q^{-3}z)f_B\prod_{z\in Z\backslash B}(x-q^{-2}z)\\
%&-q^{n+8}x(x-\frac{1}{q^3})(x+\frac{1}{q^3})a^2\prod_{z\in A}(x-q^{-2}z)\prod_{z\in B}(x-q^{-3}z)\alpha^2(f)
%\end{split}
%\end{equation}

%Since
%\begin{equation}
%\prod_{z\in Z\backslash B}(x-q^{-2}z)=\prod_{z\in A_0}(x-q^{-2}z)\prod_{z\in D}(x-q^{-2}z)
%\end{equation}
%equation (\ref{2**}) can be written as (once divided by $\prod_{z\in A_0}(x-q^{-2}z))$)

\begin{equation}\label{2***}
\begin{split}
&q^{3n+4}g \prod_{z\in B_0}(X-q^{-1}z)\prod_{z\in D}(X-q^{-1}z)\prod_{z\in D}(X-q^{-2}z)=\\
&=\left((1+q)-q^3X\right)\lambda q^{3n+3}f_A\prod_{z\in B_0}(X-q^{-1}z)\prod_{z\in D}(X-q^{-1}z)f_B\prod_{z\in D}(X-q^{-2}z)\\
&-\left(1+(1-q)q^2X+q^4X^2\right)\lambda^2q^{3n+2}\prod_{z\in A}(X-q^{-2}z)\prod_{z\in B}(X-q^{-3}z)f_B\prod_{z\in D}(X-q^{-2}z)\\
&-q^{n+8}X(X-\frac{1}{q^3})(X+\frac{1}{q^3})\lambda^3\prod_{z\in C}(X-q^{-2}z)\prod_{z\in B}(X-q^{-3}z)\alpha^2(f)
\end{split}
\end{equation}

Now $\prod_{z\in C}(X-q^{-2}z)$ divides $f_B$ since $C\subseteq B$, so $\prod_{z\in C}(X-q^{-2}z)$ divides the RHS of equation (\ref{2***}). Since for each $z\in C, (X-q^{-2}z)$ divides $f$, it does not divide $g$. Also, by Lemma \ref{irrepresentation},  for each $z\in C, (X-q^{-2}z)$ does not divide $\prod_{z\in D}(X-q^{-2}z)$. It therefore follows that
\begin{equation}\label{division}\prod_{z\in C}(X-q^{-2}z) \textit{ divides } \prod_{z\in B_0}(X-q^{-1}z)\prod_{z\in D}(X-q^{-1}z).\end{equation}

\noindent Hence, given any $z\in C$, either there is a $b\in B_0$ such that $q^{-2}z=q^{-1}b$ or there is a $d\in D$ such that $q^{-2}z=q^{-1}d$. In both these cases $q^{-1}z=b$ or $q^{-1}z=d$ it follows that  $q^{-1}z$  is a root of $f_A$ and of $g$, contradicting the fact that $f$ and $g$ are coprime. So $C=\emptyset$ - that is,  (\ref{Cempty}) is proved.

\medskip

Now we are able to complete the proof of the lemma. Without lost of generality assume that $D=Z_g\setminus (A\cup B)=\{z_{n+1}\}$. By (\ref{Cempty}), we can now rewrite (\ref{2***}) as
%\bigskip

%Let $Z_g=\{z_1,\ldots,z_{n+1}\}$ be the set of zeros of  $g=\prod_{i=1}^n(x-z_i)$, $f=af_Af_B$ for $f_A=\prod_{z\in A}(x-q^{-1}z)$ and $f_B=\prod_{z\in B}(x-q^{-2}z)$ for $A,B\subseteq Z_g$ such that $A\cap B=\emptyset$.

%Without loss of generality assume that $z_{n+1} \notin A\cup B$. Note that

%\begin{eqnarray}
%\alpha(g) &=& q^{n+1}\prod_{i=1}^{n+1}(x-q^{-1}z_i)=q^{n+1}f_A\prod_{z\in Z\backslash A}(x-q^{-1}z)\label{fA}\\
%\alpha^2(g) &=& q^{2n+2}f_B\prod_{z\in Z\backslash B}(x-q^{-2}z)\label{fB}
%\end{eqnarray}

%Combining (\ref{*}) with (\ref{fA}) and (\ref{fB}) we have

%\begin{equation}\label{**}
%\begin{split}
%&q^{3n+4}g \prod_{z\in Z\backslash A}(x-q^{-1}z)\prod_{z\in Z\backslash B}(x-q^{-2}z)=\\
%&=\left((1+q)-q^3x\right)aq^{3n+3}f_A\prod_{z\in Z\backslash A}(x-q^{-1}z)f_B\prod_{z\in Z\backslash B}(x-q^{-2}z)\\
%&-\left(1+(1-q)q^2x+q^4x^2\right)a^2q^{3n+2}\prod_{z\in A}(x-q^{-2}z)\prod_{z\in B}(x-q^{-3}z)f_B\prod_{z\in Z\backslash B}(x-q^{-2}z)\\
%&+q^{n+6}x(x-\frac{1}{q^3})(x+\frac{1}{q^3})a^2\prod_{z\in A}(x-q^{-2}z)\prod_{z\in B}(x-q^{-3}z)\alpha^2(f)
%\end{split}
%\end{equation}

%Since
%\begin{equation}
%\prod_{z\in Z\backslash B}(x-q^{-2}z)=\prod_{z\in A}(x-q^{-2}z)(x-q^{-2}z_{n+1})
%\end{equation}

\begin{equation}\label{***}
\begin{split}
&q^{3n+4}g \prod_{z\in Z_g\backslash A}(X-q^{-1}z)(X-q^{-2}z_{n+1})=\\
&=\left((1+q)-q^3X\right)\lambda q^{3n+3}f_A\prod_{z\in Z_g\backslash A}(X-q^{-1}z)f_B(X-q^{-2}z_{n+1})\\
&-\left(1+(1-q)q^2X+q^4X^2\right)\lambda^2q^{3n+2}\prod_{z\in A}(X-q^{-2}z)\prod_{z\in B}(X-q^{-3}z)f_B(X-q^{-2}z_{n+1})\\
&-q^{n+8}X(X-\frac{1}{q^3})(X+\frac{1}{q^3})\lambda^3\prod_{z\in B}(X-q^{-3}z)\alpha^2(f).
\end{split}
\end{equation}

\noindent We claim that 
\begin{equation}\label{ellclaim} \exists \, \ell \textit{ such that } 0\leq \ell \leq |B| \textit{ and } z_{n+1}=\pm q^{-\ell -1}.
\end{equation}

Since $f$ and $g$ are coprime, so are $\alpha^2(f)$ and $\alpha^2(g)$.  Hence $(X-q^{-2}z_{n+1})$ does not divide $\alpha^2(f)$. Also $z_{n+1}\neq 0$, as $g\notin Xk[X]$,  and from equation (\ref{***}) we have
\begin{equation}\label{zn}z_{n+1}=\pm \frac{1}{q}\quad \vee\quad \exists b_1\in B: z_{n+1}=q^{-1}b_1. \end{equation}
If the first case happens then (\ref{ellclaim}) follows with $l=0$. Suppose now that the second case of (\ref{zn}) holds. Then equation (\ref{***}) can be written as
\begin{equation}\label{simp_1}
\begin{split}
&q^{3n+4}g \prod_{z\in Z_g\backslash A}(X-q^{-1}z)=\\
&=\left((1+q)-q^3X\right)\lambda q^{3n+3}f_A\prod_{z\in Z_g\backslash A}(X-q^{-1}z)f_B\\
&-\left(1+(1-q)q^2X+q^4X^2\right)\lambda^2q^{3n+2}\prod_{z\in A}(X-q^{-2}z)\prod_{z\in B}(X-q^{-3}z)f_B\\
&-q^{n+8}X(X-\frac{1}{q^3})(X+\frac{1}{q^3})\lambda^3\prod_{z\in B\backslash\{b_1\}}(X-q^{-3}z)\alpha^2(f)
\end{split}
\end{equation}

Let $b'_1,\ldots,b'_{|B|}$ be an enumeration of the roots of $g$ that are in $B$, assuming without loss of generality that 
\begin{equation}\label{add1} b'_1 \; = \; qz_{n+1} \;= \; b_1.
\end{equation}
(Note that we can have multiple roots, hence it may happen that $b'_i=b'_j$ as elements of $k$ for $i\neq j$). If $q^2z_{n+1}$ is one of the elements of the list $b'_2,\ldots, b'_{|B|}$ we let $b_2=q^2z_{n+1}$. Without loss of generality we can assume that $b_2=b'_2$. We proceed with $q^3z_{n+1}$, if it is in the list $b'_3,\ldots,b'_{|B|}$ then we write $b_3=q^3z_{n+1}$, reordering the roots in the list $b'_3,\ldots,b'_{|B|}$ we may assume that $b_3=b'_3$. Iterate the procedure until we reach $l$ such that $b_i=q^{i}z_{n+1}=b'_i$ for all $i\leq l$ but $q^{l+1}z_{n+1}$ is not in the list $b'_{l+1},\ldots, b'_{|B|}$. Note that $l\leq |B|$ and that
$$q^{-1}z_{n+1}=q^{-2}b_1=q^{-3}b_2,$$
$$ q^{-1}b_i=q^{-2}b_{i+1}=q^{-3}b_{i+2}, \forall i\in\{1,\ldots, l-2\},$$
$$q^{-1}b_{l-1}=q^{-2}b_l.$$
Now we have by (\ref{add1}) that
$$\prod_{z\in\{b_1,\ldots,b_{l-2}\}\cup\{z_{n+1}\}}(X-q^{-1}z)=\prod_{z\in\{b_1,\ldots,b_{l-1}\}}(X-q^{-2}z)=\prod_{z\in\{b_2,\ldots,b_{l}\}}(X-q^{-3}z).$$

So, from (\ref{simp_1}) and after cancelling the factor above, we get
\begin{equation}\label{simp_1*}
\begin{split}
&q^{3n+4}g \prod_{z\in B\backslash \{b_1,\ldots,b_{l-2}\}}(X-q^{-1}z)=\\
&=\left((1+q)-q^3X\right)\lambda q^{3n+3}f_A\prod_{z\in Z_g\backslash A}(X-q^{-1}z)\prod_{z\in B\backslash \{b_1,\ldots,b_{l-1}\}}(X-q^{-2}z)\\
&-\left(1+(1-q)q^2X+q^4X^2\right)\lambda^2q^{3n+2}\prod_{z\in A}(X-q^{-2}z)\prod_{z\in B}(X-q^{-3}z)\prod_{z\in B\backslash \{b_1,\ldots,b_{l-1}\}}(X-q^{-2}z)\\
&-q^{n+8}X(X-\frac{1}{q^3})(X+\frac{1}{q^3})\lambda^3\prod_{z\in B\backslash\{b_1,\ldots,b_l\}}(X-q^{-3}z)\alpha^2(f)
\end{split}
\end{equation}

Now note that $(X-q^{-1}b_{l-1})$ divides the LHS and is equal to $(X-q^{-2}b_l)$, which divides $\prod_{z\in B\backslash \{b_1,\ldots,b_{l-1}\}}(X-q^{-2}z)$. Since $q^{-2}b_l$ is a root of $\alpha^2(g)$ can't be a root of $\alpha^2(f)$, neither can it be $0$. Hence either $q^{-2}b_l=q^{-3}b$ for some $b\in B\backslash\{b_1,\ldots,b_l\}$ or $q^{-2}b_l=\pm q^{-3}$. Notice that the first equation cannot hold as otherwise 
$$q^{l+1}z_{n+1}=qb_l=b\in B\backslash\{b_1,\ldots,b_l\},$$ 
which is impossible because of the definition of $l$. Thus $q^{-2}b_l=\pm q^{-3}$  and consequently $z_{n+1}=q^{-l}b_l=\pm q^{-l-1}$, proving (\ref{ellclaim}).

%%%%%%%%%%%%%%%%%%%%%%%%%%%%%%%%%%%%%%%%%%%%%%%%%%
%%%%%%%%%%%%%%%%%%%%%%%%%%%%%%%%%%%%%%%%%%%%%%%%%%

By equation (\ref{degree0}), we have

\begin{equation}
\lambda q^{-|A|-2|B|}\prod_{i=1}^nz_i=-q\prod_{i=1}^{n+1}z_i\quad \mbox{or}\quad \lambda q^{-|A|-2|B|}\prod_{i=1}^nz_i=-\prod_{i=1}^{n+1}z_i
\end{equation}
hence
\begin{equation}\label{a}
\lambda =q^{|A|+2|B|+1}z_{n+1}=\pm q^{|A|+2|B|-l}=\pm q^{n+|B|-l}\quad \mbox{or}\quad \lambda=\pm q^{n+|B|-l-1}
\end{equation}
for some $0\leq l\leq |B|$.

Looking at the leading coefficient of equation (\ref{*}), we have
$$q^{3n+4}=-q^{3n+6}\lambda-q^{3n+6}\lambda^2-q^{3n+8}\lambda^3$$
so $$0=1+q^2\lambda+q^2\lambda^2+q^4\lambda^3=(\lambda+\frac{1}{q^2})(q^4\lambda^2+q^2)$$
from what follows that either $\lambda=-q^{-2}$ or $\lambda^2=-q^{-2}$
and by (\ref{a}) we have
\begin{equation}
1=\pm q^{n+|B|-l+2}\quad \mbox{or}\quad 1=\pm q^{n+|B|-l+1}
\end{equation}
or
\begin{equation}\label{final}
-1=q^{2n+2|B|-2l+2}\quad \mbox{or}\quad -1=q^{2n+2|B|-2l}
\end{equation}
By the choice of $l$, $n+|B|-l+1\geq n+1$ and if $\mathrm{char}(k)\neq 2$ $q$ is a root of unity.
%Note that if  $n+|B|-l=0$ then $n=0=|B|$ so $f=1$ and $g=(X-z)$.

%In characteristic 2, equation (\ref{*}), implies that  $z=1/q$ and we get
%\begin{equation}\label{char2}
%\begin{split}
%q(X-z)(qX-z)q^2&=\left((1+q)-q^3X\right)a(qX-z)q^2\\
%&-\left(1+(1-q)q^2X+q^4X^2\right)a^2q^2\\
%&+q^8X(X+\frac{1}{q^3})a^3
%\end{split}
%\end{equation}
%but then $\frac{1}{q^2}$ is a root of
%$$-\left(1+(1-q)q^2X+q^4X^2\right)a^2q^2+q^8X(X+\frac{1}{q^3})a^3$$
%so $(1+q)+q(1+q)a=0$ and either $q=1$ or $a=\frac{1}{q}$.

%This shows that $q$ is a root of unity.
% $q$ is algebraic over the prime subfield of $k$, so the thesis follows.
\end{proof}

We remark inpassin that in the case $\mathrm{char}(k)=2$, the second  equality of (\ref{final}) holds if $n=0=|B|$. Then from equation (\ref{zn}) $z_{n+1}=\frac{1}{q}$. An easy computation shows that $\lambda =\frac{1}{q}$, $f=1$ and $g=(X+\frac{1}{q})$ satisfies (\ref{*}), i.e. the assumption of Lemma \ref{q algebraic} holds for any $0\ne q\in k$. In fact
$(1+X+\theta+X\theta^2)(1-X+X\theta)=[1+X(1+qX)^{-1}\theta][(1+X^2)+(1+qX+(1+q)X^2)\theta+ qX(X+1)\theta^2]$.

\begin{lemma}\label{L2} Given $S$ as before suppose that the  polynomial  $h=(1+X+\theta+X\theta^2)(1-X+X\theta)$  is   divisible on the right by a polynomial $u+t\theta$ with $u,t\in R$. If $t$ is invertible in $R=k[X]_{<X>}$ then either $k$ has characteristic $2$ or $q$ is a root of unity.
\end{lemma}
\begin{proof}
 Suppose that $h$ is divisible on the right by $d=u+t\theta$. Since the term in $h$ of degree $0$ in $\theta$ is invertible in $R$, $u$ has to be invertible as well. Thus, replacing $d$ by $u^{-1}d$, we may assume that $d=1+t\theta$. Assume now that $t$ is invertible and there are elements  $a,b,c\in R$ such that
 %  So it is enough to show that there are no elements $a,b,c,t\in R$ such that $a,t$ are invertible, $b, c\in M:=<X>$ and such that
\begin{equation}\label{E1}
(1+X+\theta+X\theta^2)(1-X+X\theta)=(a+b\theta+c\theta^2)(1+t\theta).
\end{equation}
Comparing  coefficients in (\ref{E1}) we have
\begin{eqnarray}
1-X^2&=&a\\
1-qX+X+X^2&=&b+at\\
X(1-q^2X)+qX&=&c+b\alpha(t)\\
q^2X^2&=&c\alpha^2(t)
\end{eqnarray}
Hence
\begin{eqnarray}
a&=&1-X^2\\
b&=&1-qX+X+X^2-(1-X^2)t\\
X(1+q-q^2X)&=&c+[1-qX+X+X^2-(1-X^2)t]\alpha(t)\label{16}\\
c&=&q^2X^2\alpha^2(t^{-1})
\end{eqnarray}
and from (\ref{16}) we have
\begin{equation}\label{E2}
X(1+q-q^2X)t^{-1}\alpha(t^{-1})=q^2X^2t^{-1}\alpha(t^{-1})\alpha^2(t^{-1})+(1-qX+X+X^2)t^{-1}-(1-X^2)
\end{equation}
or
\begin{equation}\label{E2b}
q^2X^2t^{-1}\alpha(t^{-1})\alpha^2(t^{-1})= (1-X^2)+ X(1+q-q^2X)t^{-1}\alpha(t^{-1})-(1-qX+X+X^2)t^{-1}
\end{equation}
Working modulo $<X>$, it follows that $1-t^{-1}\in <X>$. Write $t^{-1}=\lambda\frac{f}{g}$ for some $\lambda \in k$ and $f,g\in k[X]$ monic and co-prime such that $f,g\notin <X>$. If $f_0, g_0$ are the constant terms of the polynomials $f$ and $g$ respectively, we have
\begin{equation}\label{zero_degree}
g_0=\lambda f_0.
\end{equation}
Multiplying equation (\ref{E2b}) by $g\alpha(g)\alpha^2(g)$ we obtain the following equation in $k[X]$:
\begin{equation}\label{E2c}
\begin{split}
q^2X^2\lambda^3f\alpha(f)\alpha^2(f)&= (1-X^2)g\alpha(g)\alpha^2(g)\\
&+ X(1+q-q^2X)\lambda^2f\alpha(f)\alpha^2(g)\\
&-(1-qX+X+X^2)\lambda f\alpha(g)\alpha^2(g)
\end{split}
\end{equation}

Making use of equation (\ref{E2c}) one can easily check that $f$ and $g$ have the same degree, say $n$, and that any root of $\alpha^2(g)$ is either a root of $f$ or of $\alpha(f)$. So any root of $g$ is either a root of $\alpha^{-2}(f)$ or of $\alpha^{-1}(f)$.  Looking at the leading coefficient of equation (\ref{E2c}) we have
\begin{equation}
\lambda^3q^{3n+2}=-q^{3n}-\lambda^2q^{3n+2}-\lambda q^{3n}.
\end{equation}
Hence
\begin{equation}\label{lb}
\lambda=-1\qquad\mbox{or}\qquad\lambda^2=-q^{-2}
\end{equation}

As in Lemma \ref{q algebraic}, if necessary we can replace the base field $k$ by its algebraic closure. Let $Z_f=\{z_1,\ldots, z_{n}\}$ denote  the set of enumerated roots of $f$ and  fix the $(f,q)$-irreducible presentation $g=g_Ag_B$ of $g$. Thus $A,B$ are  subsets of  $Z_f$ and   $g_A, g_B\in k[X]$ are monic polynomials   with the set $Z_{g_A}$ of all roots of  $g_A$ is equal to $qA$ and $Z_{g_B}=q^{2}B$ and cardinality of  $C=A\cap B$ minimal among sets $A,B$ as above. Let $A$ be the disjoint union of $A_0$ and of $C$, $B$ the disjoint union of $B_0$ and of $C$ and $D=Z_f\backslash (A\cup B)$.

Write
\begin{equation}\label{fAfB}
\begin{split}
g_A=\prod_{z\in A_0}(X-qz)\prod_{z\in C}(X-qz),\\
g_B=\prod_{z\in B_0}(X-q^{2}z)\prod_{z\in C}(X-q^{2}z).
\end{split}
\end{equation}
\noindent If $C=\emptyset$, since $g=g_Ag_B$ by (\ref{zero_degree}) we get
\begin{equation}\label{lb2}
q^{|A|+2|B|}\prod_{z\in {Z_f}} z=\lambda\prod_{z\in Z_f} z
\end{equation}
taking into account (\ref{lb}) and (\ref{lb2}) it follows that either $-1=q^{|A|+2|B|}$ or $-q^{-2}=q^{2|A|+4|B|}$, so either $q$ is a root of unity or $|A|=|B|=0$ and $k$ has characteristic $2$, as required.

To complete the proof we claim that $C=\emptyset$. Assume otherwise. Since
$$\alpha^2(g)=q^{2n}\prod_{z\in A}(X-q^{-1}z)\prod_{z\in B}(X-z)$$
and
$$f\alpha(f)\alpha^2(f)=q^{3n} \prod_{z\in Z_f}(X-z)\prod_{z\in Z_f}(X-q^{-1}z)\prod_{z\in Z_f}(X-q^{-2}z),$$ dividing equation (\ref{E2c}) by $\alpha^2(g)$ we get

\begin{equation}\label{niewieder1}
\begin{split}
q^2X^2\lambda^3 q^n\prod_{z\in Z_f\backslash B}(X-z) \prod_{z\in Z_f\backslash A}(X-q^{-1}z) &\prod_{z\in Z_f}(X-q^{-2}z)= \\ &=(1-X^2)g\alpha(g)\\
&+ X(1+q-q^2X)\lambda^2f\alpha(f)\\
&-(1-qX+X+X^2)\lambda f\alpha(g)
\end{split}
\end{equation}

Since $\prod_{z\in A_0}(X-z)$ divides $f$ and $\alpha(g)=q^n\prod_{z\in A}(X-z)\prod_{z\in B}(X-qz)$ and $Z_f\backslash B=D\cup A_0$, we have

\begin{equation}\label{niewieder2}
\begin{split}
q^2X^2\lambda^3 q^n&\prod_{z\in D}(X-z) \prod_{z\in Z_f\backslash A}(X-q^{-1}z) \prod_{z\in Z_f}(X-q^{-2}z)= \\ &=(1-X^2)q^n\prod_{z\in A}(X-qz)\prod_{z\in B}(X-q^2z)\prod_{z\in C}(X-z)\prod_{z\in B}(X-qz)\\
&+ X(1+q-q^2X)\lambda^2\prod_{z\in Z_f\backslash A_0}(X-z)q^n\prod_{z\in Z_f}(X-q^{-1}z)\\
&-(1-qX+X+X^2)\lambda \prod_{z\in Z_f\backslash A_0}(X-z)q^n\prod_{z\in A}(X-z)\prod_{z\in B}(X-qz)
\end{split}
\end{equation}

By considering the RHS of (\ref{niewieder2}), given $c\in C$, 
 \begin{equation}\label{add2} c \textit{ is a root of }X^2\lambda^3 q^{n}\prod_{z\in D}(X-z) \prod_{z\in Z_f\backslash A}(X-q^{-1}z) \prod_{z\in Z_f}(X-q^{-2}z).
 \end{equation}
 But $c \in C = A \cap B$, and $g = \Pi_{z \in A}(X - qz) \Pi_{z \in B}(X - q^2 z).$ Hence $c$ is a root of $\alpha(g)$ and of $\alpha^2(g)$. However, since $f$ and $g$ are coprime,  $c$ is neither a root of $\alpha(f)$ nor of $\alpha^2(f)$. Hence $c\notin q^{-1}Z_f$ and $c\notin q^{-2}Z_f$. So, by (\ref{add2}), either $c=0$ or $c\in D$. But $g\notin <X>$ and by Lemma \ref{irrepresentation} with $\xi=q$, we conclude that $C=\emptyset$.
%%%%%%%%%%%%%%%%%%%%%%%%%%%%%%%%%%%%%%
%%%%%%%%%%%%%%%%%%%%%%%%%%%%%%%%%%%%%%
%Write $t=s_0+Xt_0$ for some $s_0\in k$ and $t_0\in R$. From (\ref{E2}) it folows that
%$$(1-s_0)s_0^2\in M=<X>$$
%and so
%\begin{equation}
%(1-s_0)s_0^2=0
%\end{equation}
%Since $t$ is a unit, $s_0\neq 0$ and  $s_0=1$. Therefore
%$$t=1+Xt_0.$$
%Now (\ref{E2}) can be written as
%\begin{equation}\label{E3}
%X(1-q^2X+q)(1+q^2X\alpha^2(t_0))=q^2X^2+(1-qX+X+X^2-1+X^2-Xt_0+X^3t_0)(1+qX\alpha(t_0))%(1+q^2X\alpha^2(t_0))
%\end{equation}
%\begin{equation}\label{E4}
%(1-q^2X+q)(1+q^2X\alpha^2(t_0))=q^2X+(1-q+2X-t_0+X^2t_0)(1+qX\alpha(t_0))(1+q^2X\alpha^2(t_0))
%\end{equation}
%Write $t_0=s_1+Xt_1$ for some $s_1\in k$ and $t_1\in R$. From (\ref{E4}) we get
%$$1=1-q-s_ 1.$$
%So $t=1-qX+X^2t_1$, (\ref{E2}) can be written as
%\begin{equation}\label{E5}
%\begin{split}
%& (1+q-q^2X)(1-q^3X+q^4X^2\alpha^2(t_1))=\\
%&=q^2X+(1+2X-Xt_1-qX^2+X^3t_1)(1+qX\alpha(t_0))(1+q^2X\alpha^2(t_0))
%\end{split}
%\end{equation}
%and we have that $1+q=1$, a contradiction.
\end{proof}

Note that, in the setting of the above lemma, if the characteristic of $k$ is $2$ then $q$ is not necessarily a root of unity. In fact
\begin{equation}
(1+X+\theta+X\theta^2)(1-X+X\theta)=((1+X^2)+(1+q)X\theta+q^2X^2\theta^2)(1+\theta)
\end{equation}
for any $q\in k^*$.

\bigskip

\noindent {\it Proof of Theorem \ref{k[X]thm}}
 It is known (Cf. \cite[Example 6.9]{BCM2}) that the  polynomial $v=1+X+\theta+X\theta^2\in S$ is an irreducible element of  type {\bf{(C)}} while $1-X+X\theta$ is an irreducible element of type {\bf{(B)}} in $S$.

Suppose   $q\in k$ is not a root of unity and that $char(k)\neq 2$. Then Lemmas \ref{L2} and \ref{q algebraic} guarantee that the product $vw$ cannot be written as product of an irreducible elements of type {\bf{(B)}} and a one of type {\bf{(C)}}. This shows that $S$ does not satisfy the monoid commutativity  condition of \cite[Threorem 1.3(2)]{BCM2}. Thus, by that theorem, $S$   doesn't have $(\diamond)$ for right $S$-modules.

\bigskip

Since  $q\in k$ is a root of unity if and only if $q^{-1}$ is, applying the above theorem to the opposite ring of $S$, we get
\begin{corollary}
 If $q\in k$ is not a root of unity and $char(k)\neq 2$, %  over the prime subfield of $k$,
 then $S$ does not satisfy  $(\diamond)$ for left $S$-modules.
\end{corollary}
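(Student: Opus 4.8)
The statement to be proved is the Corollary: if $q \in k$ is not a root of unity and $\mathrm{char}(k) \neq 2$, then $S$ does not satisfy $(\diamond)$ for \emph{left} $S$-modules.

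\medskip

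The plan is to deduce the left-module statement from Theorem \ref{k[X]thm} (the right-module statement) by passing to the opposite ring. First I would observe that left $S$-modules are exactly right $S^{\mathrm{op}}$-modules, and that $(\diamond)$ for left $S$-modules is the same as (right) $(\diamond)$ for $S^{\mathrm{op}}$; so it suffices to show that $S^{\mathrm{op}}$ does not satisfy (right) $(\diamond)$. The key point is to identify $S^{\mathrm{op}}$ as another Ore extension of the same shape covered by Theorem \ref{k[X]thm}. Since $S = R[\theta;\alpha]$ with $R = k[X]_{<X>}$ commutative and $\alpha$ an automorphism, the relation $\theta r = \alpha(r)\theta$ in $S$ becomes $r \cdot_{\mathrm{op}} \theta = \theta \cdot_{\mathrm{op}} \alpha(r)$ in $S^{\mathrm{op}}$, i.e. $\theta \cdot_{\mathrm{op}} r = \alpha^{-1}(r) \cdot_{\mathrm{op}} \theta$. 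Hence $S^{\mathrm{op}} \cong R[\theta;\alpha^{-1}]$ as a $k$-algebra (here one uses that $R$ is commutative, so $R^{\mathrm{op}} = R$). Under this isomorphism $\alpha^{-1}$ is the $k$-algebra automorphism of $R = k[X]_{<X>}$ sending $X \mapsto q^{-1}X$.

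\medskip

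Next I would check that $S^{\mathrm{op}} \cong R[\theta;\alpha^{-1}]$ satisfies the hypotheses of Theorem \ref{k[X]thm} with $q$ replaced by $q^{-1}$: the coefficient ring is still $k[X]_{<X>}$, the characteristic of $k$ is still not $2$, and $q^{-1}$ is not a root of unity precisely when $q$ is not a root of unity. Applying Theorem \ref{k[X]thm} to $R[\theta;\alpha^{-1}]$ therefore yields that $R[\theta;\alpha^{-1}]$, equivalently $S^{\mathrm{op}}$, does not satisfy (right) $(\diamond)$. Unwinding the opening reduction, $S$ does not satisfy $(\diamond)$ for left modules, which is the assertion of the Corollary.

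\medskip

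This argument is essentially formal; there is no serious obstacle. The only point requiring a little care is the identification $S^{\mathrm{op}} \cong R[\theta;\alpha^{-1}]$ — one must verify the defining commutation relation transports correctly and that commutativity of $R$ is what makes $R^{\mathrm{op}} = R$ — but this is routine. One might alternatively phrase the whole thing via the observation that $(\diamond)$ is not left–right symmetric in general, so it is worth stating explicitly that the opposite-ring passage is legitimate here because the class of algebras $R[\theta;\alpha]$ with $R$ commutative noetherian and $\alpha \in \mathrm{Aut}_k(R)$ is closed under $(-)^{\mathrm{op}}$.
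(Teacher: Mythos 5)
Your argument is correct and is exactly the paper's own: the authors also pass to the opposite ring, identify $S^{\mathrm{op}}$ with $R[\theta;\alpha^{-1}]$ (so $q$ is replaced by $q^{-1}$), and invoke Theorem \ref{k[X]thm} together with the observation that $q$ is a root of unity if and only if $q^{-1}$ is. No further comment is needed.
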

It is known that if $q$ is a root of unity, then $S$ satisfies $(\diamond)$ property. Thus, Theorem  \ref{k[X]thm} yields also:
\begin{corollary}
Suppose $\mathrm{char}(k)\neq 2$. Then  $S$   satisfies $(\diamond)$   if and only if $q$ is a root of unity.
\end{corollary}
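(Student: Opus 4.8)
The plan is to deduce the statement from Theorem \ref{k[X]thm} together with the (already known) good behaviour of $S$ when the automorphism $\alpha$ has finite order, splitting the equivalence into its two implications according to whether or not $q$ is a root of unity.

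First I would dispose of the ``if'' direction. Suppose $q$ is a root of unity, of multiplicative order $t$ in $k^\times$. Since $\alpha^n(X) = q^n X$ for all $n$, we have $\alpha^t = \mathrm{id}_R$, so $|\alpha| = t < \infty$. Consequently $\theta^t$ is central in $S$, the subring $C := R[\theta^t]$ is a commutative noetherian ring, and $S = \bigoplus_{i=0}^{t-1}\theta^i C$ is a finitely generated (in fact free of rank $t$) $C$-module. Thus $S$ is a noetherian PI algebra which is module-finite over a commutative noetherian subring, and such algebras are known to satisfy $(\diamond)$ (this is the fact recalled immediately before the statement; see \cite{BCM}). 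Hence $S$ satisfies $(\diamond)$ in this case, with no hypothesis on $\mathrm{char}(k)$ needed.

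For the ``only if'' direction, suppose $q$ is not a root of unity; since $\mathrm{char}(k)\neq 2$ by assumption, Theorem \ref{k[X]thm} applies and shows directly that $S$ does not satisfy (right) $(\diamond)$. If the two-sided version of $(\diamond)$ is wanted, the Corollary above---obtained by applying Theorem \ref{k[X]thm} to $S^{\mathrm{op}} \cong k[X]_{\langle X\rangle}[\theta;\alpha^{-1}]$ and using that $q^{-1}$ is again not a root of unity---shows that left $(\diamond)$ also fails. Either way, $S$ does not satisfy $(\diamond)$ when $q$ is not a root of unity.

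Combining the two implications under the standing hypothesis $\mathrm{char}(k)\neq 2$ gives exactly the claimed equivalence. I do not expect any genuine obstacle in this argument itself: it is a short deduction. All the difficulty is concentrated in the input it quotes, namely Theorem \ref{k[X]thm} (equivalently Lemmas \ref{q algebraic} and \ref{L2}), which is the technical heart of the section; the root-of-unity direction is soft, resting only on the reduction of $S$ to a module-finite extension of a commutative noetherian ring.
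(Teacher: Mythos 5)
Your proposal is correct and follows essentially the same route as the paper, which likewise combines Theorem \ref{k[X]thm} (for the ``only if'' direction) with the known fact that $S$ satisfies $(\diamond)$ when $q$ is a root of unity; the paper simply quotes that fact, whereas you usefully spell out the reduction to a module-finite extension of the commutative noetherian ring $R[\theta^t]$. No gaps.
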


\bigskip

\section*{Acknowledgements}{ }

Part of this work was done when the first and third authors visited the University of Porto in October 2023. They thank the university and its staff for their hospitality. The third author thanks the support of CMUP. The work of the first author was supported by Leverhulme Emeritus Fellowship EM-2017-081/9.

Some of this work was done while the second author visited the University of Warsaw in March 2023 and in February and March 2024. She would like to thank the University of Warsaw for their support. The second author was also partially supported by CMUP, member of LASI, which is financed by national funds through FCT – Funda\c{c}\~ao para a Ci\^encia e a Tecnologia, I.P., under the projects with reference UIDB/00144/2020 and UIDP/00144/2020.

We are thankful to Joachim Jelisiejew for providing us a short argument for the proof of the first part of Proposition \ref{Jelisi}.

\begin{bibdiv}
 \begin{biblist}
 \bib{BVO}{article}{
   author={Bavula, V.},
   author={Van Oystaeyen, F.},
   title={The simple modules of certain generalized crossed products},
   journal={J. Algebra},
   volume={194},
   date={1997},
   number={2},
   pages={521--566},
   issn={0021-8693},
   review={\MR{1467166}},
   doi={10.1006/jabr.1997.7038},
}
\bib{BCM}{article}{
   author={Brown, Ken},
   author={Carvalho, Paula A. A. B.},
   author={Matczuk, Jerzy},
   title={Simple modules and their essential extensions for skew polynomial
   rings},
   journal={Math. Z.},
   volume={291},
   date={2019},
   number={3-4},
   pages={877--903},
   issn={0025-5874},
   review={\MR{3936092}},
   doi={10.1007/s00209-018-2128-8},
}

% \bib{BCM}{article}{
%AUTHOR = {Brown, K.}
%AUTHOR = {Carvalho, P.}
%AUTHOR = {Matczuk, J.}
%     TITLE = {Simple modules and their essential extensions for skew
%             polynomial rings},
%   JOURNAL = {Math. Z.},
%  FJOURNAL = {Mathematische Zeitschrift},
%    VOLUME = {291},
%      YEAR = {2019},
%    NUMBER = {3-4},
 %    PAGES = {877--903},
%      ISSN = {0025-5874},
%   MRCLASS = {16D50 (16P40 16S35)},
%  MRNUMBER = {3936092},
%       DOI = {10.1007/s00209-018-2128-8},
%       URL = {https://doi-org.ezproxy.lib.gla.ac.uk/10.1007/s00209-018-2128-8}
%}

\bib{BCM2}{article}{
   author={Brown, Ken},
   author={Carvalho, Paula A. A. B.},
   author={Matczuk, Jerzy},
   title={Critical and injective modules over skew polynomial rings},
   journal={J. Pure Appl. Algebra},
   volume={227},
   date={2023},
   number={11},
   pages={Paper No. 107441, 26},
   issn={0022-4049},
   review={\MR{4595944}},
   doi={10.1016/j.jpaa.2023.107441},
}

\bib{BG}{book}{
   author={Brown, Ken A.},
   author={Goodearl, Ken R.},
   title={Lectures on algebraic quantum groups},
   series={Advanced Courses in Mathematics. CRM Barcelona},
   publisher={Birkh\"{a}user Verlag, Basel},
   date={2002},
   pages={x+348},
   isbn={3-7643-6714-8},
   review={\MR{1898492}},
   doi={10.1007/978-3-0348-8205-7},
}

\bib{PaulaChristianDilek}{article}{
   author={Carvalho, Paula A. A. B.},
   author={Lomp, Christian},
   author={Pusat-Yilmaz, Dilek},
   title={Injective modules over down-up algebras},
   journal={Glasg. Math. J.},
   volume={52},
   date={2010},
   number={A},
   pages={53--59},
   issn={0017-0895},
   review={\MR{2669095}},
   doi={10.1017/S0017089510000261},
}

\bib{DS}{article}{
   author={Damiano, Robert F.},
   author={Shapiro, Jay},
   title={Twisted polynomial rings satisfying a polynomial identity},
   journal={J. Algebra},
   volume={92},
   date={1985},
   number={1},
   pages={116--127},
   issn={0021-8693},
   review={\MR{0772474}},
   doi={10.1016/0021-8693(85)90148-6},
}

\bib{GS}{article}{
   author={Goodearl, K. R.},
   author={Schofield, A. H.},
   title={Non-Artinian essential extensions of simple modules},
   journal={Proc. Amer. Math. Soc.},
   volume={97},
   date={1986},
   number={2},
   pages={233--236},
   issn={0002-9939},
   review={\MR{0835871}},
   doi={10.2307/2046504},
}

\bib{I}{article}{
   author={Irving, Ronald S.},
   title={Prime ideals of Ore extensions over commutative rings},
   journal={J. Algebra},
   volume={56},
   date={1979},
   number={2},
   pages={315--342},
   issn={0021-8693},
   review={\MR{0528579}},
   doi={10.1016/0021-8693(79)90341-7},
}

\bib{LM}{article}{
   author={Leroy, Andr\'{e}},
   author={Matczuk, Jerzy},
   title={Primitivity of skew polynomial and skew Laurent polynomial rings},
   journal={Comm. Algebra},
   volume={24},
   date={1996},
   number={7},
   pages={2271--2284},
   issn={0092-7872},
   review={\MR{1390373}},
   doi={10.1080/00927879608825699},
}

\bib{M}{article}{
   author={Matlis, Eben},
   title={Injective modules over Noetherian rings},
   journal={Pacific J. Math.},
   volume={8},
   date={1958},
   pages={511--528},
   issn={0030-8730},
   review={\MR{0099360}},
}

\bib{Ian}{article}{
   author={Musson, Ian M.},
   title={Finitely generated, non-Artinian monolithic modules},
   conference={
      title={New trends in noncommutative algebra},
   },
   book={
      series={Contemp. Math.},
      volume={562},
      publisher={Amer. Math. Soc., Providence, RI},
   },
   isbn={978-0-8218-5297-2},
   date={2012},
   pages={211--220},
   review={\MR{2905561}},
   doi={10.1090/conm/562/11138},
}

%\bib{Mu}{article}{
%    AUTHOR = {Musson, I. M.},
 %    TITLE = {Injective modules for group rings of polycyclic groups. {I},
 %             {II}},
 %  JOURNAL = {Quart. J. Math. Oxford Ser. (2)},
 % FJOURNAL = {The Quarterly Journal of Mathematics. Oxford. Second Series},
 %   VOLUME = {31},
 %     YEAR = {1980},
 %   NUMBER = {124},
  %   PAGES = {429--448, 449--466},
  %    ISSN = {0033-5606},
 %  MRCLASS = {16A27 (16A52)},
 % MRNUMBER = {596979},
%MRREVIEWER = {Donald S. Passman},
%       DOI = {10.1093/qmath/31.4.429},
%       URL = {https://doi.org/10.1093/qmath/31.4.429},
%}

\bib{RZ}{article}{
   author={Rosenberg, Alex},
   author={Zelinsky, Daniel},
   title={Finiteness of the injective hull},
   journal={Math. Z.},
   volume={70},
   date={1958/59},
   pages={372--380},
   issn={0025-5874},
   review={\MR{0105434}},
   doi={10.1007/BF01558598},
}

\bib{SharpVamos}{article}{
   author={Sharp, R. Y.},
   author={V\'{a}mos, P.},
   title={Baire's category theorem and prime avoidance in complete local
   rings},
   journal={Arch. Math. (Basel)},
   volume={44},
   date={1985},
   number={3},
   pages={243--248},
   issn={0003-889X},
   review={\MR{0784093}},
   doi={10.1007/BF01237858},
}

 \end{biblist}
\end{bibdiv}

\bigskip

  \textsc{School of Mathematics and Statistics, University of Glasgow, Glasgow G12 8QQ, Scotland}\par\nopagebreak
  \textit{E-mail address}: \texttt{Ken.Brown@glasgow.ac.uk}

  \medskip
   \textsc{CMUP, Departamento de Matem\'atica, Faculdade de Ci\^encias, Universidade do Porto,  Rua do Campo Alegre s/n, 4169-007 Porto, Portugal}\par\nopagebreak
  \textit{E-mail address}: \texttt{pbcarval@fc.up.pt}

  \medskip

   \textsc{Institute of Mathematics, University of Warsaw,  Banacha 2, 02-097 Warsaw, Poland}
  \par\nopagebreak
  \textit{E-mail address}:   \texttt{jmatczuk@mimuw.edu.pl}

\medskip

 \end{document}